\newfont{\bb}{msbm10 at 11pt}
\newfont{\bbsmall}{msbm8 at 8pt}
\def\rth{\mathbb{R}^3}
\def\R{\mathbb{R}}
\def\B{\mathbb{B}}
\def\N{\mathbb{N}}
\def\S{\Sigma}
\def\Hip{\mathbb{H}}
\def\D{\mathbb{D}}
\def\LM{\mathbb{L}}
\newcommand{\la}{\looparrowright}
\newcommand{\ben}{\begin{enumerate}}
	\newcommand{\bit}{\begin{itemize}}
		\newcommand{\een}{\end{enumerate}}
	\newcommand{\eit}{\end{itemize}}
\newcommand{\wh}{\widehat}
\newcommand{\ds}{\displaystyle}
\newcommand{\Int}{\mbox{\rm Int}}
\newcommand{\Ind}{\mbox{\rm Index}}
\newcommand{\Inj}{\mbox{\rm Inj}}
\newcommand{\wt}{\widetilde}
\newcommand{\ed}{\end{document}}
\newcommand{\ov}{\overline}
\def\a{{\alpha}}
\def\g{{\gamma}}
\def\G{{\Gamma}}
\def\l{{\lambda}}
\def\L{\Lambda}
\def\de{{\delta}}
\def\be{{\beta}}
\def\ve{{\varepsilon}}
\def\cP{\mathcal{P}}
\def\cC{\mathcal{C}}
\def\cte.{\mathop{\rm cte.}\nolimits}
\def\det{\mathop{\rm det}\nolimits}
\def\cosh{\mathop{\rm cosh }\nolimits}
\def\tanh{\mathop{\rm tanh }\nolimits}
\def\arccosh{\mathop{\rm arccosh }\nolimits}
\def\N{\mathbb{N}}
\def\B{\mathbb{B}}
\def\R{\mathbb{R}}
\def\D{\mathbb{D}}
\def\H{\mathbb{H}}
\newtheorem{theorem}{Theorem}[section]
\newtheorem{lemma}[theorem]{Lemma}
\newtheorem{proposition}[theorem]{Proposition}
\newtheorem{remark}[theorem]{Remark}
\newtheorem{definition}[theorem]{Definition}
\newtheorem{claim}[theorem]{Claim}
\numberwithin{equation}{section}
\definecolor{pp}{rgb}{.5,0,.7}
\begin{document}
\begin{title}
{Geometry of CMC  surfaces of finite index}
\end{title}

\begin{author}
{William H. Meeks III$^*$
 \and Joaqu\'\i n P\' erez\thanks{Research of both
authors was partially supported by MINECO/MICINN/FEDER grant no.
PID2020-117868GB-I00, regional grants P18-FR-4049 and A-FQM-139-UGR18,
and by the “Maria de Maeztu” Excellence Unit IMAG,
reference CEX2020-001105-M, funded by MCINN/AEI/10.13039/501100011033/ CEX2020-001105-M.}}
\end{author}
\maketitle
\vspace{-.6cm}

\begin{abstract}
Given $r_0>0$,  $I\in \N\cup \{0\}$ and $K_0,H_0\geq0$,
let $X$ be a complete Riemannian $3$-manifold with
injectivity radius  $\Inj(X)\geq r_0$ and with the  supremum
of absolute sectional curvature  at most
$K_0$, and let $ M \looparrowright X$ be a complete
immersed surface of constant mean curvature
$H\in [0,H_0]$ and with index at most $I$.
We will obtain geometric estimates for such an $ M \looparrowright X$
as a consequence of the
Hierarchy Structure Theorem in~\cite{mpe18}.
The Hierarchy Structure Theorem (see Theorem~\ref{mainStructure} below)
will be applied to understand global
properties of $ M \looparrowright X$, especially results related to
the area and diameter of $M$. By item~\ref{itF} of
Theorem~\ref{mainStructure}, the area of such a non-compact  $ M \looparrowright X$
is infinite.  We will improve this area result by proving the following
when  $M$ is connected; here  $g(M)$ denotes the genus of the orientable cover of $M$:
\ben
\item There exists $C_1=C_1(I,r_0,K_0,H_0)>0$,
such that $\mbox{\rm Area}(M)\geq C_1(g(M)+1)$.
\item There exist   $C>0$, $G(I)\in \N$  independent
of $r_0,K_0,H_0$ and also $C$ independent of $I$
such that if $g(M)\geq G(I)$, then
$\mbox{\rm Area}(M)\geq \frac{C}{(\max\{1,\frac{1}{r_0},\sqrt{K_0}, H_0\})^2}(g(M)+1)$.
\item If the scalar curvature $\rho$ of
$X$ satisfies  $3H^2+\frac{1}{2}\rho\geq c$ in $X$ for some $c>0$, then
there exist $A,D>0$ depending on $c,I,r_0,K_0,H_0$ such that
$\mbox{\rm Area}(M)\leq A$ and  $ \mbox{\rm Diameter}(M)\leq D$.
Hence, $M$ is compact and, by item~1,   $ g(M)\leq A/C -1.$
\een
\vspace{.1cm}

\noindent{\it Mathematics Subject Classification:} Primary 53A10,
   Secondary 49Q05, 53C42

\noindent{\it Key words and phrases:} Constant mean curvature, finite index
$H$-surfaces, area estimates for constant mean curvature surfaces,
Hierarchy Structure Theorem, Bishop-Cheeger-Gromov relative volume
comparison theorem,  area of hyperbolic annuli.
\end{abstract}
\maketitle

\section{Introduction}  \label{sec:introduction}
Throughout the paper,  $X$ denotes a complete Riemannian $3$-manifold with positive
injectivity radius $\Inj(X)$ and bounded absolute sectional curvature.
Let $M$ be a complete immersed surface in $X$ of constant mean curvature
$H\geq 0$, which we call
an {\it $H$-surface} in $X$. The Jacobi operator of $M$ is the
Schr\"{o}dinger operator
\[
L=\Delta +|A_M|^2+ \mbox{Ric}(N),
\]
where
$\Delta $ is the Laplace-Beltrami operator on $M$,
$|A_M|$ is the
norm of its second fundamental form and $\mbox{Ric}(N)$ denotes
the Ricci curvature of $X$ in the direction of the unit normal vector $N$ to $M$;
the index of $M$ is the index of $L$,
\[
\mbox{Index}(M)=\lim _{r\to \infty }\mbox{Index}(B_M(p,r)),
\]
where $B_M(p,r)$ is the intrinsic metric ball in $M$ of
radius $r>0$ centered at a point $p\in M$, and
$\mbox{Index}(B_M(p,r))$ is the number of negative eigenvalues of $L$
on $B_M(p,r)$ with Dirichlet boundary conditions. Here, we have assumed that the immersion
is two-sided (this holds in particular if $H>0$).  In the case, $H=0$ and the immersion is
one-sided, then the index is defined in a similar manner using compactly supported
variations in the normal bundle; see Definition~\ref{DefIndexNO} for details.

The primary goal of this paper is to apply the Hierarchy Structure
Theorem~\ref{mainStructure} (proven in~\cite{mpe18})
to
understand certain global properties of closed constant
mean curvature surfaces in Riemannian 3-manifolds.
Theorem~\ref{mainStructure}
describes the geometric structure of complete immersed
$H$-surfaces $F\colon M\la X$ (also called {\it $H$-immersions})
which have a fixed  bound $I\in \N\cup \{0\}$ on their
index and a fixed upper bound $H_0$ for their constant mean curvature $H
\geq 0$, in certain small
intrinsic neighborhoods of points with sufficiently
large norm $|A_M|$ of their second fundamental forms.

Our main applications of Theorem~\ref{mainStructure} appear
in Theorem~\ref{main2}  and Theorem~\ref{main22};
these two theorems provide lower bounds for the areas and intrinsic
diameters of  immersed closed $H$-surfaces $M$ in $X$ of finite
index in terms of their genera, when  the indices and the constant
mean curvatures of the surfaces are bounded from above by fixed
constants. Theorem~\ref{main22} also provides upper bounds for the
area of balls $B_M(x,r)$ in $M$ for every $x\in M$ and $r>0$,
independently on whether or not $M$ is compact but depending on
upper bounds for $H$ and the index of $M$.

In the case that $M$ is non-orientable,
the genus $g(M)$ of $M$ is the genus of its oriented  cover.

\begin{theorem}[Area and diameter estimates]
\label{main2}
For $r_0>0$, $K_0, H_0\geq 0$, consider all
complete Riemannian  3-manifolds $X$ with injectivity radius
$\Inj(X)\geq r_0$ and absolute sectional curvature bounded from above by
$K_0$, and let $\lambda=\max\{1,\frac{1}{r_0},\sqrt{K_0}, H_0\}$.
Let $M$ be a complete immersed $H$-surface in $X$ with empty boundary,
$H\in [0,H_0]$, index at most~$I\in \N\cup \{ 0\}$ and genus $g(M)$,
which in the language of Theorem~\ref{mainStructure} implies
$M\in \Lambda =\L(I, H_0, r_0, 1, K_0)$ with additional chosen
constant $\tau=\pi/10$. Then:
\ben
\setcounter{enumi}{-1}
\item
\label{It0}
The area of $M$ is greater than $C_A/\l^2$, where
\[
C_A:=\pi\, \left( \frac{\pi}{4}\right)^2 e^{-\frac{\pi}{2}-1+\frac{\pi}{4}} \approx 0.325043,
\]
and if $M$ is compact, the extrinsic diameter of each component of
$M$ is greater than $\frac{\pi}{4\l}$.

\item
\label{It1a} {\rm (Item~1 in the abstract).}
There exists $C_1(I)>0$ (independent of  $M,r_0,K_0,H_0$) such that:
\begin{equation}
\label{1.1}
\mbox{\rm Area}(M)\geq \frac{C_1(I)}{\lambda^2}(g(M)+1).
\end{equation}

\item  \label{It1b}
{\rm (Item~2 in the abstract).}
Let   $C_s\geq 2\pi$ be the
universal curvature estimate for stable $H$-surfaces described in
Theorem~\ref{stableestim1s} below and let
$C=\pi/(3+4C_s+4C_s^2)$. There exists a $G(I)\in \N$, so that whenever $g(M)\geq G(I)$, then:
\begin{equation}
	\label{1.1b}
	\mbox{\rm Area}(M)\geq \frac{C}{\l^2}(g(M)+1).
\end{equation}

%
	
\item \label{It3b}
{\rm (Item~3 in the abstract).}
Suppose that  the scalar curvature $\rho$ of $X$ satisfies $3H^2+\frac{1}{2}\rho
\geq c$ for some $c>0$. Then, if $M$
is connected, then $M$ is compact, and furthermore, there exists $A_2(I,c)>0$  such that:
\begin{equation}
\label{1.7}
\mbox{\rm Area}(M)\leq \frac{A_2(I,c)}{\l^2}  \qquad
\mbox{\rm Diameter}(M)\leq \frac{ 4\pi(I+1)}{\l\sqrt{3c}},
\qquad
g(M)\leq \frac{A_2(I,c)}{C_1(I)}-1.
\end{equation}
\een
\end{theorem}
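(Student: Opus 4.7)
My overall plan is to apply the Hierarchy Structure Theorem~\ref{mainStructure} to decompose $M$ into an essentially disjoint collection of neighborhoods, each carrying a controlled amount of area and, for items~\ref{It1a}--\ref{It1b}, a controlled amount of topology. For item~\ref{It0}, I would first establish the local lower area bound. At scale $1/\lambda$ the ambient geometry is nearly Euclidean (since $K_0\leq \lambda^2$, $\Inj(X)\geq 1/\lambda$, $H\leq \lambda$), so the Bishop-Cheeger-Gromov volume comparison combined with the standard CMC monotonicity formula should yield $\mbox{\rm Area}(M\cap B_X(p,\pi/(4\lambda)))\geq C_A/\lambda^2$ at any $p\in M$; the precise form of $C_A=\pi(\pi/4)^2 e^{-\pi/2-1+\pi/4}$ will drop out after tracking the exponential decay factor in the CMC monotonicity and the volume-ratio comparison at this scale. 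The extrinsic-diameter assertion for compact components is a standard maximum-principle consequence: if a compact component sat inside an ambient ball of radius below $\pi/(4\lambda)$, one could foliate a containing ball by constant-mean-curvature spheres (which exist at this scale by the nearly Euclidean geometry) and contradict $H\leq \lambda$ via the tangency principle.

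For items~\ref{It1a} and~\ref{It1b}, I would use the hierarchy to extract a collection of $\gtrsim g(M)+1$ points $\{p_i\}\subset M$ whose ambient balls $B_X(p_i,\pi/(4\lambda))$ are pairwise disjoint and whose intrinsic pre-images together account for the topology of $M$. Summing the local estimate from item~\ref{It0} over these balls then yields~\eqref{1.1}. This is the main obstacle I anticipate: one must show that the genus of $M$ is bounded above by a universal multiple of the number of terminal hierarchy neighborhoods, and that those intrinsic neighborhoods can be realized as pairwise extrinsically disjoint ambient balls of the required radius. To obtain the improved constant $C=\pi/(3+4C_s+4C_s^2)$ in item~\ref{It1b}, I would invoke the universal stable curvature estimate $C_s$ from Theorem~\ref{stableestim1s} once $g(M)\geq G(I)$, so that outside finitely many index-related exceptional pieces the surface lies in the stable regime, where $C_s$ furnishes sharper, $I$-independent area and separation estimates; carefully bookkeeping the packing constants explains the $3+4C_s+4C_s^2$ factor.

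For item~\ref{It3b}, I would exploit $3H^2+\tfrac12\rho\geq c$, which by the Gauss equation makes the zeroth-order term of the Jacobi operator pointwise at least $c$ on any two-sided stable subdomain. A Fischer-Colbrie-Schoen-type argument (testing the stability inequality against a sine cut-off along an intrinsic geodesic) then forces any stable intrinsic geodesic ball in $M$ to have diameter at most $\pi/\sqrt{3c}$. Since $\mbox{\rm Index}(M)\leq I$, outside at most $I$ disjoint intrinsic balls $M$ is stable, so any minimizing intrinsic geodesic of length exceeding $4\pi(I+1)/(\lambda\sqrt{3c})$ would have to traverse a stable segment longer than $\pi/\sqrt{3c}$ (after accounting for the $\lambda$-dependent extent of the unstable balls), contradicting the stable diameter estimate. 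This forces compactness of $M$, and combining the resulting intrinsic diameter bound with the uniform curvature estimates furnished by the Hierarchy Structure Theorem gives $\mbox{\rm Area}(M)\leq A_2(I,c)/\lambda^2$; the genus bound is then immediate from~\eqref{1.1}.
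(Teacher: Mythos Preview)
Your approach to item~\ref{It0} and item~\ref{It3b} is essentially the paper's: a mean-curvature comparison with geodesic spheres for the extrinsic diameter, a monotonicity-type lower bound for the area of a small intrinsic ball, and for item~\ref{It3b} the Rosenberg stable-radius bound $R_c=2\pi/\sqrt{3c}$ combined with the pigeonhole argument that a geodesic of length exceeding $2(I+1)R_c$ would contain $I+1$ disjoint balls of radius $>R_c$, forcing one to be stable. The area bound in item~\ref{It3b} then comes from an upper area estimate for intrinsic balls (the paper proves this separately as Theorem~\ref{main22}, via Bishop--Cheeger--Gromov comparison on $\widetilde{M}$).

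Your plan for items~\ref{It1a}--\ref{It1b}, however, has a genuine gap that you yourself flag but do not resolve. The Hierarchy Structure Theorem produces at most $k\le I$ concentration regions $\Delta_i$, so there is no way to extract $\gtrsim g(M)+1$ pairwise disjoint ambient balls from the hierarchy: the number of ``terminal neighborhoods'' is bounded by the index, not by the genus, and the genus can be arbitrarily large for fixed $I$. A packing argument of the type you describe would in effect assume what is to be proved (that $M$ is spread out in proportion to its genus). The paper's route is entirely different: on the complement $\widetilde{M}=M\setminus\cup_i\Delta_i$ the second fundamental form is bounded by $A_1(I)$, hence $K\ge K_1(I):=-1-\tfrac12 A_1^2$ there, and Gauss--Bonnet gives
\[
|K_1|\,\mathrm{Area}(\widetilde{M})\ \ge\ -\int_{\widetilde{M}}K\ =\ -2\pi\chi(M)+2\pi\chi(\cup_i\Delta_i)-\int_{\partial(\cup_i\Delta_i)}\kappa_g,
\]
with the last two terms controlled by the explicit estimates in item~\ref{it4} of Theorem~\ref{mainStructure}; this yields $\mathrm{Area}(\widetilde{M})\gtrsim (g(M)+1)/|K_1(I)|$ once $g(M)\ge 12I-3$, and item~\ref{It0} handles the remaining small-genus range. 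For item~\ref{It1b} the same Gauss--Bonnet scheme is run after further excising the $\le I$ unstable balls of radius~$1$ coming from a separate point-picking lemma; on the remaining region $\widetilde{M}_2$ one has $|A|\le \widehat{C}_s(1)=1+2C_s$ independently of $I$, and the denominator $1+\tfrac12\widehat{C}_s(1)^2=\tfrac12(3+4C_s+4C_s^2)$ is precisely the curvature lower bound entering Gauss--Bonnet there --- it is not a packing constant.
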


In the proof of Theorem~\ref{main2} the estimates for
the constants $C_1(I)$, 
 $G(I)$,
and $A_2(I,c)$ 
will be given in terms of the related constants  $A_1(I)$, $\de(I)$
 given in the Hierarchy Structure Theorem (for the value 
$\tau =\frac{\pi}{10}$)
for the space $\L(I,1,1,1,1)$ described in Definition~\ref{def:L}.


There are a number of recent results in the literature
related to area estimates for connected, closed,
embedded minimal and CMC surfaces of finite index
in a closed 3-dimensional  Riemannian manifold, some of which include
results described in Theorem~\ref{main2} under more restrictive geometric hypotheses
on the surfaces and/or the ambient space.
Some of these recent results, obtained independently,
can be found in  the papers
\cite{AiHo1,abcs1,bst1,bush1,ChKeMa1,max1,mt12,sat1}.
We refer the interested reader to  \cite{bst1}
for further references in this active research area and for the
general historical background that motivates this  subject material.
\par
\vspace{.2cm}
\noindent{\bf Acknowledgments:}  The authors would like to thank Harold Rosenberg
for his thoughts and discussions of our initial attempts at understanding
the existence of the linear area estimates
given in item~\ref{It1a} of Theorem~\ref{main2}.


\section{The Hierarchy Structure Theorem}
\label{Sec:structure}

In the sequel, we will denote by $B_X(x,r)$ (resp. $\ov{B}_X(x,r)$)
the open (resp. closed) metric ball centered at a point
$x\in X$ of radius $r>0$. For a Riemannian surface $M$ with smooth compact boundary $\partial M$,
\[
\kappa(M)=\int_{\partial M}\kappa_g,
\]
will  stand for the total geodesic curvature of $\partial M$, where $\kappa _g$ denotes the
pointwise geodesic curvature of $\partial M$  with
respect to the inward pointing unit conormal vector of $M$ along $\partial M$.

\begin{definition}
\label{def:L}
{\rm
For every $I\in \N\cup \{ 0\}$,   $\ve_0>0$, and $H_0,A_0,K_0\geq 0$, we
denote by
\[
\L=\L(I, H_0,\ve_0,A_0,K_0)
\]
the space of all $H$-immersions $F\colon M\la X$
satisfying the following conditions:
\begin{enumerate}[({A}1)]
\item $X$ is a complete Riemannian 3-manifold with injectivity radius
$\Inj(X)\geq \ve_0$
and absolute sectional curvature bounded from above by $K_0$.
\item $M$ is a complete surface with smooth boundary (possibly empty)
and when $\partial M\neq \varnothing$,
there is at least one point in $M$ of distance $\ve_0$ from $\partial M$.
\item $H\in [0, H_0]$ and $F$ has index at most $I$.
\item  If $\partial M\neq \varnothing$, then for any $\ve \in
(0,\infty ]$ we let
$U(\partial M,\ve)=\{x\in M\mid d_M(x,\partial M)<\ve \}$
be the open intrinsic $\ve $-neighborhood of $\partial M$. Then,
$|A_M|$ is bounded from above by $A_0$ in $U(\partial M,\ve_0)$.
\end{enumerate}
}
\end{definition}

Suppose that $(F\colon M\la X)\in \L$ and $\partial M\neq \varnothing$.
For any positive $\ve_1 \leq \ve_2\in [0,\infty ]$, let
\[
U(\partial M,\ve_1,\ve_2 )=U(\partial M,\ve_2)\setminus
\overline{U(\partial M,\ve_1)},\quad
\ov{U}(\partial M,\ve_1,\ve_2 )=\ov{U(\partial M,\ve_2)}\setminus
U(\partial M,\ve_1).
\]
When $\partial M=\varnothing$, we  define $U(\partial M,\ve_1,\infty )=
\ov{U}(\partial M,\ve_1,\infty )$  as $M$.

In the next result we will make use of harmonic
coordinates $\varphi_x\colon U\to B_X(x,r)$ defined on an open subset $U$ of $\R^3$
containing the origin, taking values
in a geodesic ball $B_X(x,r)$ centered at a point $x\in X$ of radius
$r\in (0,\Inj_X(x))$ (here, $\Inj_X(x)$ stands for the injectivity radius
of $X$ at $x$) and with a
$C^{1,\a}$ control of the ambient metric on $X$, see
Definition~\ref{defharm} for details.

\begin{theorem}[Structure Theorem for finite index $H$-surfaces~\cite{mpe18}]
\label{mainStructure}
Given $\ve_0>0$, $K_0, H_0, A_0\geq 0$, $I\in \N\cup \{0\}$ and
$\tau \in (0,\pi /10]$, there exist  $A_1\in [A_0,\infty)$,
$\de_1,\de\in (0,\ve_0/2]$ with $\de_1\leq \de/2$,
such that the following hold:
	
For any $(F\colon M\la  X)\in \L=\L(I, H_0,\ve_0,A_0,K_0)$,
there exists 
a (possibly empty) finite collection $\cP_F=\{p_1,\ldots,p_k\}\subset
U(\partial M,\ve_0 ,\infty )$ of points, $k\leq I$, and
numbers $r_F(1),\ldots ,{r_F}(k)\in [\de_1,\frac{\de}{2}]$
with $r_F(1)>4r_F(2)>\ldots >4^{k-1}r_F(k)$, satisfying
the following:
\begin{enumerate}
\item
\label{it1}
\underline{Portions with concentrated curvature:}
Given $i=1,\ldots ,k$, let $\Delta_i$ be the component
of \newline
$F^{-1}(\ov{B}_X(F(p_i),r_F(i)))$ containing $p_i$.
Then:
\begin{enumerate}[a.]
\item $\Delta _i\subset \ov{B}_M(p_i,\frac{5}{4}r_F(i))$ (in particular,
$\Delta _i$ is compact).
\item $\Delta _i$ has smooth boundary
and $F(\partial \Delta_i)\subset \partial \ov{B}_X(F(p_i),r_F(i))$.
\item  $B_M(p_i,\frac{7}{5}r_F(i))\cap
B_M(p_j,\frac{7}{5}r_F(j))=\varnothing $ for $i\neq j$.
In particular, the intrinsic distance between $\Delta _i,\Delta _j$ is
greater than $\frac{3}{10}\de_1$ for every $i\neq j$.
\item $|A_M|(p_i)=\max_{\Delta_i}|A_M|= \max \{ |A_M|(p)\ :\ p\in M
\setminus \cup _{j=1}^{i-1}B_M(p_j,\frac54 r_F(j))\} \geq  A_1$,
see Figure~\ref{fig1}.
\begin{figure}
\begin{center}
\includegraphics[width=11cm]{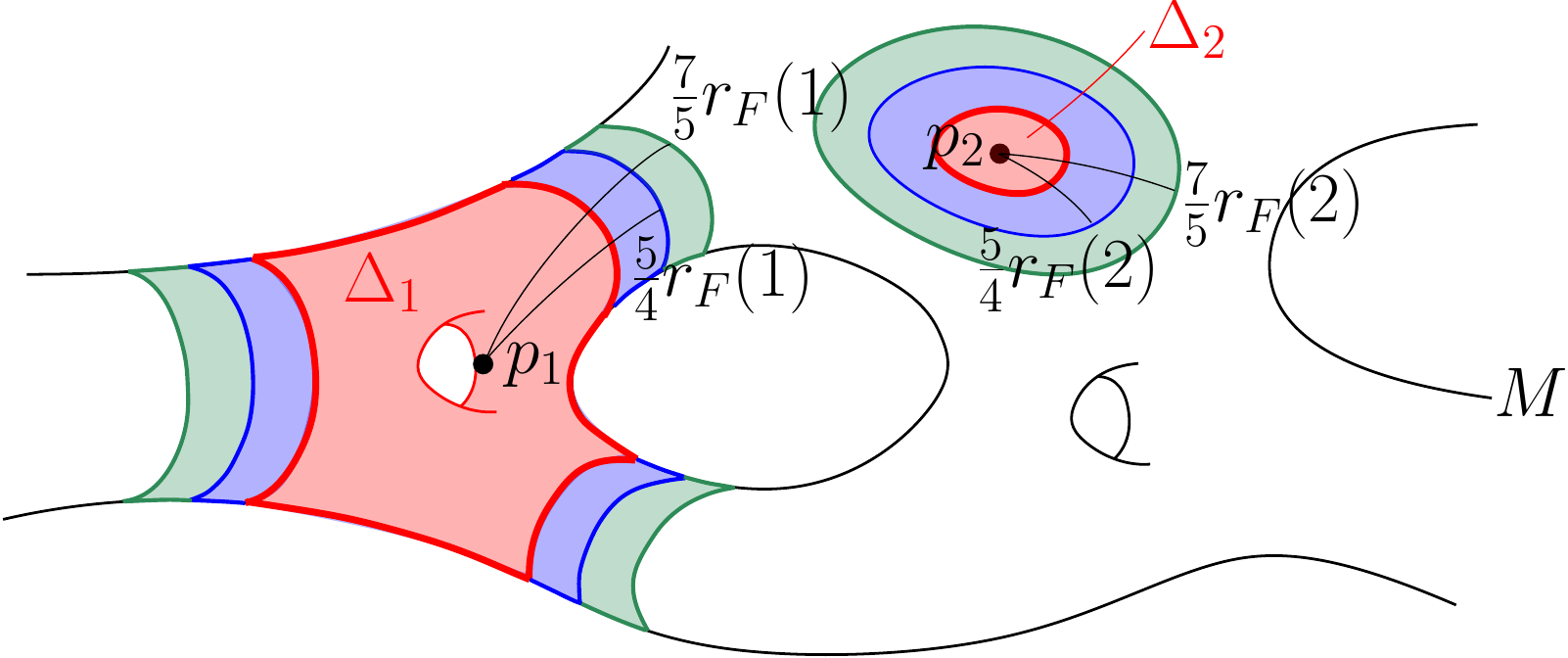}
\caption{The second fundamental form concentrates
inside the intrinsic compact regions $\Delta_i$ (in red),
each of which is mapped through the immersion $F$
to a surface inside the extrinsic ball in $X$ centered at
$F(p_i)$ of radius $r_F(i)>0$, with $F(\partial \Delta_i)\subset \partial
\ov{B}_X(F(p_i),r_F(i))$.
Although the boundary $\partial \Delta_i$ might not be at constant
intrinsic distance
from the `center' $p_i$, $\Delta_i$ lies entirely inside the
intrinsic ball centered at $p_i$ of radius $\frac{5}{4}r_F(i)$.
The intrinsic open balls $B_M(p_i,\frac{7}{5}r_F(i))$ are pairwise
disjoint.}
\label{fig1}
\end{center}
\end{figure}
			
\item The index  $\Ind(\Delta_i)$ of $\Delta_i$ is positive.
\end{enumerate}
		
\item
\label{it3}
\underline{Transition annuli:}
For $i=1,\ldots ,k$ fixed, let $e(i)\in \N$ be the number of boundary
components of $\Delta _i$.
Then, there exist planar disks $\D_{1},\ldots ,\D_{e(i)}\subset
T_{F(p_i)}X$ of radius $2r_F(i)$ centered at the origin in $T_{F(p_i)}X$,
such that if we denote by
\[
P_{i,h}=\varphi _{F(p_i)}(\D _h),\quad h\in \{ 1,\ldots ,e(i)\} ,
\]
(here $\varphi_{F(p_i)}$ denotes  a harmonic chart
centered at $F(p_i)$, see Definition~\ref{defharm}),
then
\[
F(\Delta_i)\cap[\ov{B}_X(F(p_i),r_F(i))\setminus  B_X(F(p_i),r_F(i)/2)]
\]
consists of $e(i)$ annular multi-graphs\footnote{See
Definition~\ref{DefMulti} for this notion of multi-graph.}
$G_{i,1},\ldots,G_{i,e(i)}$ over their projections
to $P_{i,1},\ldots ,P_{i,e(i)}$, with multiplicities $m_{i,1},\ldots
m_{i,e(i)}\in \N$ respectively,
and whose related graphing functions $u$ satisfy
\begin{equation}
\frac{|u(x)|}{|x|}+|\nabla u|(x)\leq \tau ,
\label{estimu}
\end{equation}
where we have taken coordinates $x$ in each of the $P_{i,h}$ and denoted
by $|x|$ the extrinsic distance to $F(p_i)$ in the ambient metric of $X$,
see Figure~\ref{fig2}.
\begin{figure}
\begin{center}
\includegraphics[width=10cm]{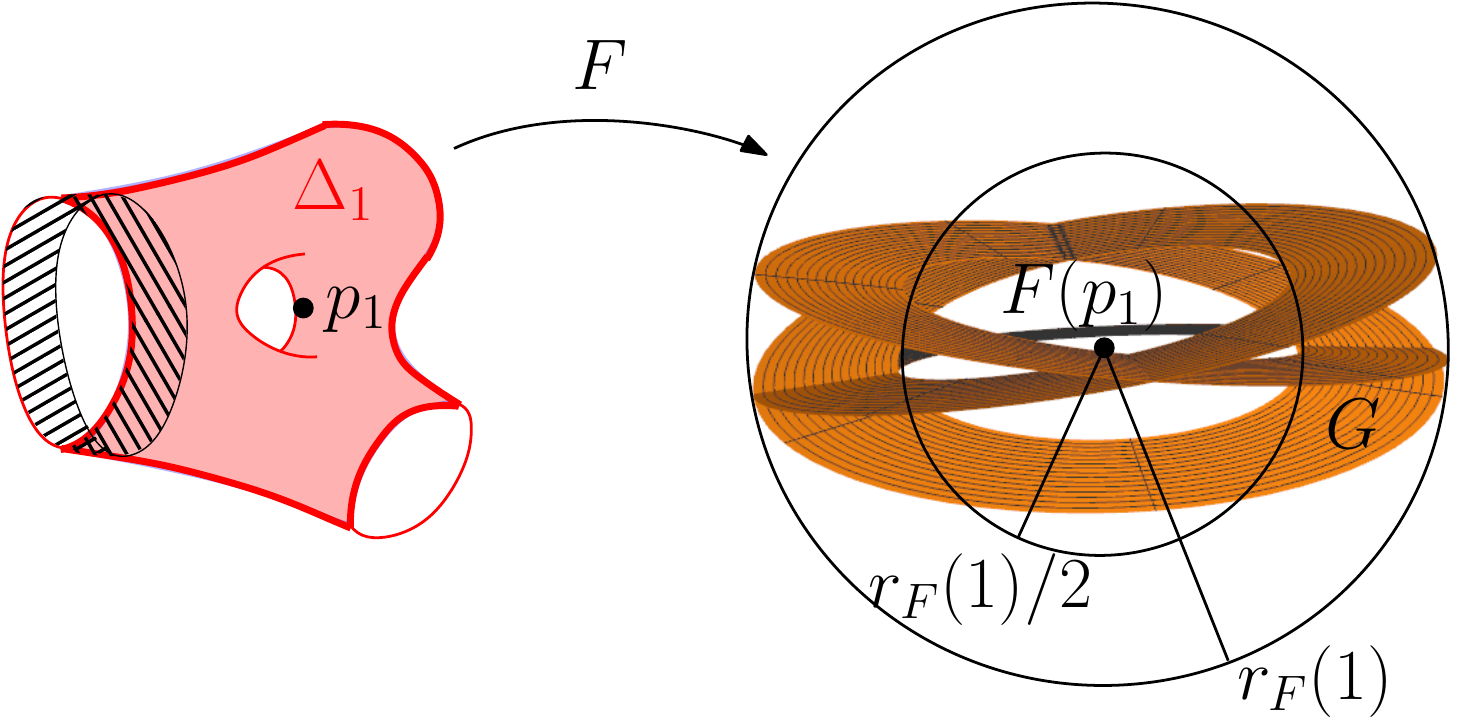}
\caption{The transition annuli: On the right, one has the extrinsic
representation in $X$ of one of the annular multi-graphs $G$ in
$F(\Delta_1)\cap[\ov{B}_X(F(p_1),r_F(1))\setminus  B_X(F(p_1),r_F(1)/2)]$;
in this case, the multiplicity of the
multi-graph is 3. On the left, one has the intrinsic representation of
the same annulus (shadowed); there is one such annular multi-graph for
each boundary component of $\Delta _i$.}
\label{fig2}
\end{center}
\end{figure}		
\item \label{it7}
\underline{Region with uniformly bounded curvature:}
$|A_{M}|< A_1$  on $\wt{M}:=M\setminus \bigcup_{i=i}^k \Int(\Delta_i)$.
\end{enumerate}
	
\noindent
Moreover, the following additional properties hold:
\begin{enumerate}[A.]
\item  \label{it2}
$\sum_{i=1}^k I(\Delta_i)\leq I$, where $I(\Delta_i)=\Ind(\Delta_i)$.
	
\item \label{it4} \underline{Geometric and topological estimates:} Given
$i=1,\ldots ,k$, let $m(i):=\sum_{h=1}^{e(i)}m_{i,h}$ be the total
spinning of the boundary of $\Delta_i$, let $g(\Delta_i)$ denote the
genus of $\Delta_i$
(in the case $\Delta_i$ is non-orientable, $g(\Delta_i)$ denotes the
genus of its oriented cover\footnote{If $\Sigma$ is
a compact non-orientable
surface and	$\wh{\S}\stackrel{2:1}{\to}\S$ denotes the
oriented cover of $\S$, then the genus of $\wh{\S}$ plus 1 equals the
number of cross-caps in $\S$.}). Then, $m(i)\geq 2$ and the following
upper estimates hold:
\begin{enumerate}
	\item If $I(\Delta_i)=1$, then $\Delta_i$ is
		orientable, $g(\Delta_i)=0$, and $(e(i),m(i))
		\in \{ (2,2),(1,3)\}$.
	
	\item If $I(\Delta_i)\geq 2$ and
	$\Delta_i$ is orientable, then
	$m(i)\leq 3I(\Delta_i)-1$,
        $e(i)\leq 3I(\Delta_i)-2$,
	and $g(\Delta_i)\leq 3I(\Delta_i)-4$.
	
	\item If $\Delta_i$ is non-orientable, then $I(\Delta_i)\geq 2$,
	$m(i)\leq 3I(\Delta_i)-1$,
	$e(i)\leq 3I(\Delta_i)-2$ and
	$g(\Delta_i)\leq 6I(\Delta_i)-8$.
	
\item $\chi(\Delta_i)\geq -6I(\Delta_i) +2m(i)+e(i)$
and thus, $\chi(\cup_{i=1}^k \Delta_i) \geq -6I +2S+e$, where
\[
e=\sum_{i=1}^ke(i),\qquad S=\sum_{i=1}^km(i).
\]

\item $|\kappa(\Delta_i)-2\pi m(i)|\leq \frac{\tau}{m(i)}$,
and so, the total geodesic curvature $\kappa(\widetilde{M})$
of $\widetilde{M}$ along $\partial \wt{M}\setminus \partial M$ satisfies
$\left| \kappa(\wt{M})+2\pi S\right| \leq
\frac{\tau}{2}k$, 
and so,
\begin{equation}
	2\pi S-\frac{\tau}{2} k \leq\sum_{i=1}^k\kappa(\Delta_i)\leq
	2\pi S+\frac{\tau}{2} k .
	\label{2.3a}
\end{equation}

\item
$-\int _{\Delta_i}K >3\pi,$ and so,  
\begin{equation} \label{2.4}
-\int _{\cup_{i=1}^k\Delta_i}K
=-2\pi\chi(\cup_{i=1}^k\Delta_i)+\int _{\cup_{i=1}^k\partial \Delta_i}\kappa_g>3k\pi.
\end{equation}
\end{enumerate}

\item
\label{it6}
\underline{Genus estimate outside the concentration of curvature:}
If $M$ is orientable, $k\geq1 $ and the genus $g(M)$ of $M$ is finite, then the
genus $g(\widetilde{M})$ of $\widetilde{M}$ satisfies
$0\leq g(M)-g(\wt{M})\leq 	3I-2$.
		
\item
\label{it8}
\underline{Area estimate outside the concentration of curvature:}
If $k\geq 1$, then
\[
\mbox{\rm Area}(\widetilde{M})
\geq 2\pi \sum_{i=1}^k m(i)r_F(i)^2
\geq\mbox{\rm Area}\left( \bigcup_{i=1}^k\Delta_i\right) \geq
k\pi \de_1^2.
\]
	
\item
\label{itF}
There exists a $C>0$,  depending   on $\ve_0,K_0,H_0$ and independent of
$I$, such that
\begin{equation}
\mbox{\rm Area}(M)\geq
\left\{
\begin{array}{ll}
{\displaystyle C\max\{1,\mbox{\rm Radius}(M)\} }&
\mbox{if $\partial M\neq \varnothing$,}
\\
{\displaystyle C\max\{1,\mbox{\rm Diameter}(M)\} } & \mbox{if $\partial
M=\varnothing$,}
\end{array}\right.
\label{Area}
\end{equation}
where
\begin{eqnarray}
\mbox{\rm Radius}(M)&=&\sup_{x\in M} d_M(x,\partial M)\in (0,\infty]
\quad \mbox{if }\partial M\neq \varnothing,\nonumber
\\
\mbox{\rm Diameter}(M)&=&\sup_{x,y\in M} d_M(x,y) \qquad \qquad \quad \;
\mbox{if }\partial M=\varnothing.\nonumber	
\end{eqnarray}
In particular, if $M$ has infinite radius
or if $M$ has empty boundary and it is non-compact, then
its area is infinite.
\end{enumerate}
\end{theorem}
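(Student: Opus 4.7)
The plan is to construct the points $p_i$, radii $r_F(i)$, and sets $\Delta_i$ by an inductive point-selection procedure of Ekeland type, together with a global compactness argument that produces uniform constants $A_1, \delta, \delta_1$ valid for the entire class $\L=\L(I,H_0,\ve_0,A_0,K_0)$. The constants are chosen first by contradiction: assuming no such constants work, one extracts a sequence $(F_n\colon M_n\la X_n)\in \L$ for which the conclusion fails. After rescaling about an accumulation point of maximum $|A_{M_n}|$, one produces a smooth subsequential limit $F_\infty\colon M_\infty \la X_\infty$ of complete $H$-surfaces of index $\leq I$ either in an ambient manifold of bounded geometry or (if the rescaling factor diverges) in $\R^3$. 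The limit contradicts standard classification/structure results for $H$-surfaces of bounded index with a singular point of concentrated curvature, giving the uniform $A_1, \delta, \delta_1$.

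With the constants fixed, one proceeds inductively on $i$. If $\sup_M|A_M|<A_1$, set $k=0$ and $\wt M=M$. Otherwise, choose $p_1$ realizing the supremum of $|A_M|$ on $U(\partial M,\ve_0,\infty)$ (hypothesis (A4) prevents escape into the boundary neighborhood). Then one selects $r_F(1)\in[\delta_1,\delta/2]$ to be the largest scale at which: (i) the component $\Delta_1$ of $F^{-1}(\ov B_X(F(p_1),r_F(1)))$ through $p_1$ is compact and contained in $B_M(p_1,\tfrac54 r_F(1))$; and (ii) on the extrinsic transition annulus $\ov B_X(F(p_1),r_F(1))\setminus B_X(F(p_1),r_F(1)/2)$ the image $F(\Delta_1)$ decomposes into multi-graphs satisfying \eqref{estimu}. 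Scales where (ii) fails contain secondary curvature concentration, which by maximality of $|A_M|(p_1)$ must lie in a strictly sub-ball; iterating the selection on $M\setminus B_M(p_1,\tfrac54 r_F(1))$ yields $(p_2,r_F(2))$ with $r_F(2)<r_F(1)/4$, and so on. The hierarchy condition $r_F(i-1)>4r_F(i)$ is thus built into the nested scale construction, while disjointness of the balls $B_M(p_i,\tfrac75 r_F(i))$ follows from the selection outside earlier balls.

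The process terminates in $k\leq I$ steps: each $\Delta_i$ has positive index (item Bd), since $|A_M|(p_i)\geq A_1$ makes the Rayleigh quotient $\int(|\nabla\phi|^2-(|A_M|^2+\Ric(N))\phi^2)$ negative on a suitable bump $\phi$ supported near $p_i$; combined with additivity of index over pairwise-disjoint compact pieces, this gives $k\leq \sum I(\Delta_i)\leq I$. On $\wt M=M\setminus\bigcup\Int(\Delta_i)$ the bound $|A_M|<A_1$ holds by construction. The topological bounds (item B parts a--c) follow from a sharpened Montiel--Ros/L\'opez--Ros analysis relating $I(\Delta_i)$ to $e(i)$, $m(i)$, and $g(\Delta_i)$, with the lower bound $m(i)\geq 2$ coming from the multi-graph structure of $\partial \Delta_i$. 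The geodesic-curvature estimate in item Be is obtained by direct integration along the nearly planar multi-graphs using \eqref{estimu}; plugged into Gauss--Bonnet on $\Delta_i$ this gives Bd and Bf. Item C follows because $M\setminus \wt M$ consists of $k\leq I$ pieces of bounded genus, and item F is a consequence of the Bishop--Cheeger--Gromov relative volume comparison on $\wt M$ (where curvature is bounded) combined with the area contribution $\geq k\pi\delta_1^2$ from $\bigcup\Delta_i$.

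The principal obstacle is the multi-graph analysis in the transition annuli with uniform, $I$-controlled multiplicities. Small $|A|$ on an extrinsic ball of definite size in an ambient manifold of bounded geometry does give graphical pieces by standard curvature estimates, but the surface may wrap with arbitrary multiplicity around the center, and one must bound these multiplicities in terms of $I$. This forces a simultaneous inductive management of the multi-graph data and the index: each additional sheet in a multi-graph near $\partial \Delta_i$ contributes a quantifiable amount to the Jacobi spectrum (via an eigenvalue estimate for the Laplacian on a multiply-covered cylinder-like annulus with the CMC potential), which together with the index budget $I(\Delta_i)$ yields finite, $I$-dependent bounds on $m(i)$ and $e(i)$. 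This delicate trade-off between analytic (index) and geometric (multiplicity) data is the heart of the proof and the source of the constants $A_1,\delta,\delta_1$ not being explicit.
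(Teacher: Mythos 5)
Your proposal cannot be checked against an internal argument, because the paper does not prove Theorem~\ref{mainStructure} at all: it is imported verbatim from~\cite{mpe18}, and the present paper only applies it. Judged on its own, your overall architecture (uniform constants via a compactness/contradiction and rescaling argument, inductive selection of points of near-maximal $|A_M|$ at nested scales, additivity of index over the disjoint pieces giving $k\leq I$) is a plausible outline of how such a structure theorem is proved. However, three of the steps you rely on are either unsupported or incorrect as stated.

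First, positive index of $\Delta_i$ does not follow from a bump-function Rayleigh quotient at a point where $|A_M|(p_i)\geq A_1$: pointwise largeness of $|A_M|$ gives no control of $\int |A_M|^2\phi^2$ over the fixed scale $r_F(i)\geq \de_1$, so the quotient need not be negative. The correct mechanism is the contrapositive of the curvature estimate for stable $H$-surfaces (Theorem~\ref{stableestim1s}): choose $A_1$ larger than the stable bound at scale $\de_1$, so that stability of $\Delta_i$ would contradict $|A_M|(p_i)\geq A_1$. Second, your key device for bounding the multiplicities --- that ``each additional sheet in a multi-graph contributes a quantifiable amount to the Jacobi spectrum'' --- is false in general: nearly flat, highly sheeted multi-graphs (helicoid-like pieces) can be stable, so multiplicity does not feed directly into the index. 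The estimates $m(i)\leq 3I(\Delta_i)-1$, $e(i)\leq 3I(\Delta_i)-2$, the genus bounds, and the case analysis for $I(\Delta_i)=1$ come instead from analyzing blow-up limits at the concentration points, which are complete finite-index minimal or $H$-surfaces in $\R^3$ (or in limits of bounded geometry), together with index-versus-topology estimates for such limit surfaces; nothing in your sketch supplies this ingredient, and it is the heart of item~\ref{it4}. Third, item~\ref{itF} cannot be a consequence of the Bishop--Cheeger--Gromov comparison, which yields area \emph{upper} bounds under lower curvature bounds; the linear-in-radius/diameter \emph{lower} bound requires a uniform positive lower bound for the area of intrinsic balls of a fixed radius (a mean-curvature comparison of the type used in the proof of item~\ref{It0} of Theorem~\ref{main2}) chained along a minimizing geodesic, and one must also explain why the resulting constant $C$ is independent of $I$, which your argument does not address. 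Finer points such as $e(i)=1$ forcing $m(i)\geq 3$, the estimate $|\kappa(\Delta_i)-2\pi m(i)|\leq \tau/m(i)$, and $-\int_{\Delta_i}K>3\pi$ also require quantitative use of \eqref{estimu} together with Gauss--Bonnet, which you only gesture at.
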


\begin{definition} \label{DefIndexNO}
{\rm
Given a $1$-sided minimal immersion $F\colon M\looparrowright X$,
let $\wt{M}\to M$ be the two-sided cover of
$M$ and let $\tau \colon \wt{M}\to \wt{M}$ be the associated deck
transformation of order 2. Denote by $\wt{\Delta}$,
$|\wt{A}|^2$ the Laplacian and squared norm of the second fundamental
form of $\wt{M}$, and let $N\colon \wt{M}\to TX$
be a unitary normal vector  field.  The index of $F$ is defined as the
number of negative eigenvalues of the elliptic, self-adjoint operator
$\wt{\Delta} +|\wt{A}|^2+\mbox{Ric}(N,N)$ defined over the space of
compactly supported smooth functions $\phi \colon \wt{M}\to \R$
such that $\phi \circ \tau =-\phi $.
}
\end{definition}

\begin{definition} \label{defharm}
{\rm
Given a (smooth) Riemannian manifold $X$, a local chart $(x_1,\ldots x_n)$
defined on an open set $U$ of $X$
is called {\it harmonic} if $\Delta x_i=0$ for all $i=1,\ldots n$.
		
Following Definition 5 in~\cite{hehe}, we make the next definition.
Given $Q>1$ and $\alpha \in (0,1)$, we define the $C^{1,\alpha }$-{\it harmonic
radius} at a point $x_0\in X$ as the largest number
$r=r(Q,\a)(x_0)$ so that on the geodesic ball $B_X(x_0,r)$ of center $x_0$
and radius $r$, there is a harmonic coordinate chart
such that the metric tensor $g$ of $X$ is $C^{1,\a}$-controlled in these
coordinates. Namely, if $g_{ij}$, $i,j=1,\ldots ,n$, are
the components of $g$ in these coordinates, then
\begin{enumerate}
\item $Q^{-1}\de _{ij}\leq g_{ij}\leq Q\, \de_{ij}$ as bilinear forms,
\item ${\displaystyle \sum_{\be =1}^3 r \sup _{y}|\frac{\partial g_{ij}}{\partial x_{\be}}(y)|
+\sum _{\be =1}^3r^{1+\a}\sup _{y\neq z}\frac{\left|
\frac{\partial g_{ij}}{\partial x_{\be}}(y)-
\frac{\partial g_{ij}}{\partial x_{\be}}(z)\right| }{d_X(y,z)^{\a}}}\leq Q-1$.
\end{enumerate}
The $C^{1,\a}$-harmonic radius $r(Q,\a )(X)$ of $X$ is now defined by
\[
r(Q,\a)(X)=\inf_{x_0\in X}r(Q,\a)(x_0).
\]
If the absolute sectional curvature of $X$ is bounded by some
constant $K_0>0$ and Inj$(X)\geq r_0>0$, then Theorem~6 in~\cite{hehe}
implies that given $Q>1$ and $\a \in (0,1)$
there exists $C=C(Q,\a ,r_0,K_0)$ (observe that $C$ does not depend
on $X$) such that $r(Q,\a)(X)\geq C$.
	}
\end{definition}

\begin{definition} \label{DefMulti}
{\rm Let $f\colon\S\looparrowright \R^3$ be an immersed annulus,  $P$
a plane passing through the origin and 
$\Pi\colon \rth \to P$ the orthogonal projection.
Given $m\in \N$, let $\sigma_m\colon P_m \to P^*=P\setminus \{\vec{0}\}$  be the $m$-sheeted
covering space of $P^*$.  We say that $\S$ is an $m$-{\it valued graph}
over $P$ if $\vec{0}\not \in (\Pi\circ f)(\S)$
and $\Pi\circ f\colon \S \to P^*$
has a smooth injective lift $\wt{f} \colon  \S \to P_m$
through $\sigma_m$; in this case, we say that $\Sigma $ has {\it degree $m$}
as a multi-graph.

Given $Q>1$ and $\a \in (0,1)$, let $X$ be a Riemannian 3-manifold
and $(x_1,x_2,x_3)$ a harmonic chart for $X$ defined on $B_X(x_0,r)$, $x_0\in X$, $r>0$,
where the metric tensor $g$ of $X$ is $C^{1,\a}$-controlled in the
sense of Definition~\ref{defharm}. Let
$P\subset B_X(x_0,r)$ be the image by this harmonic chart of the
intersection of a plane in $\R^3$ passing through the origin with the domain of the chart.
In this setting, the notion of $m$-valued graph over $P$ generalizes
naturally to an immersed annulus $f\colon\S\looparrowright B_X(x_0,r)$,
where the projection $\Pi$ refers to the harmonic coordinates.
If $f\colon\S\looparrowright B_X(x_0,r)$ is an $m$-valued graph over $P$
and $u$ is the corresponding graphing function that
expresses $f(\Sigma)$, we can consider the gradient $\nabla u$ with
respect to the metric on $P$ induced by the ambient metric of $X$.
Both $u$ and $|\nabla u|$ depend on the choice of harmonic coordinates
around $x_0$ (and they also depend on $Q$),
but if $\frac{|u(x)|}{|x|}+|\nabla u|<\tau $ for some $\tau \in (0,\pi /10]$
and $Q>1$ sufficiently close to $1$, then
$\frac{|u(x)|}{|x|}+|\nabla u|<2\tau$ for any other choice of harmonic
chart around $x_0$ with this restriction of $Q$.
}
\end{definition}

\section{The proof of Theorem~\ref{main2}} \label{sec:Area}

This section is dedicated to the proof of Theorem~\ref{main2}.
Note the complete surfaces considered in this theorem have empty
boundary. Let $F\colon M\looparrowright X$ be an immersion as in the
statement of Theorem~\ref{main2}.

We will use the notation in Theorem~\ref{mainStructure} and fix $\tau
=\pi /10$.
Notice that as the boundary of $M$ is empty, then we may consider
the $H$-immersion $F\colon M\la X$ of index $I$ described in
Theorem~\ref{main2}
to be an element of $\L(I, H_0,r_0,1,K_0)$, where
$H_0,r_0,K_0$ are given in the hypotheses of Theorem~\ref{main2}.

\subsection{Normalizing the space $\Lambda$}
\label{sec3.1}
After scaling the Riemannian metric of $X$ by the square root of
\begin{equation}
	\label{3.0}
\lambda=\max\{1,\frac{1}{r_0},\sqrt{K_0}, H_0\},
\end{equation}
one obtains a new
Riemannian manifold $X'$;
note that this scaling of the metric scales arc length in $X$ by the
factor $\lambda\geq 1$, and that the
metric of $M$ induced by the isometric immersion $F$ creates an
associated isometric immersion $F'\colon M\la X'$ such
that $F'(p)=F(p)$ for each $p\in M$.
After this homothetic change of the metric,
we can consider  $F'\colon M\la X'$ to be an immersion satisfying the
following properties:
\ben
\item $\Inj(X')\geq 1$.
\item The absolute sectional curvature of $X'$ is less than or equal to 1.
\item $F'$ is an isometric immersion of constant mean curvature $H'
\in [0,1]$.
\item $(F'\colon M\la X')\in \L(I, 1,1,1,1)$.
\item   $\mbox{Area}(F)=\lambda^2 \,\mbox{Area}(F').$
\item  $\mbox{Diameter}(F)=\lambda \,\mbox{Diameter}(F').$
\een
Items 5 and 6 above allow us to easily convert estimates on the
area of subdomains and lengths of curves in the domain of $F$
to areas and lengths of the corresponding domains and curves in the
domain of $F'$, and thereby,
these conversion formulae reduce the proofs of statements given in
Theorem~\ref{main2} for $F\in \L(I, H_0,r_0,1,K_0)$ to
the corresponding estimates for $F'$ in $\L(I, 1,1,1,1)$.
Thus,
for the remainder of the proof of Theorem~\ref{main2}, we will assume
$F\colon M\la X$ lies in $\L(I, 1,1,1,1)$, and refer to Area$(M)$,
Diameter$(M)$ for those with respect to the induced metric by $F$.

\subsection{Proof of item~\ref{It0} of Theorem~\ref{main2}}
\label{sec3.2}
Consider an element $(F\colon M\la X)\in \L(I, 1,1,1,1)$.
If $M$ is non-compact, then the last sentence in item~\ref{itF} of
Theorem~\ref{mainStructure} states that $M$ has infinite area,
which proves that the inequality Area$(M)\geq C_A$
in item~\ref{It0} holds vacuously (for any choice of $C_A>0$).
If moreover  $M$ is connected, then there exists
a geodesic ray in $M$, i.e., an embedded, length-minimizing unit-speed
geodesic arc $\g\colon[0,\infty)\to M$; in particular, the diameter of
$M$ is infinite, and thus the second statement in item~\ref{It0} also
holds vacuously.

For the remainder of this section we will assume that $M$ is compact.

\begin{lemma}
\label{lemma3.1}
Given $x_0\in M$, let $M(x_0)$ be the component
 of $M$ containing $x_0$. Then, $M(x_0)$ is not contained in
the closed extrinsic ball $\overline{B}_X(x_0,\pi/4)$ (in particular,
$\partial B_M(x_0,\pi/4)$ is not empty).
\end{lemma}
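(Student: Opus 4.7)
The plan is to argue by contradiction using a superharmonic test function on the closed surface $M(x_0)$; recall that in the present subsection we have reduced to $M$ compact with $\partial M=\varnothing$. Suppose for contradiction that $F(M(x_0))\subseteq \overline B_X(F(x_0),\pi/4)$, and set $r(x):=d_X(F(x_0),F(x))$, $\phi:=\cos r$. Since $\pi/4<1\leq\Inj(X)$, the squared distance $r^2$ is smooth on $M(x_0)$ (being the pull-back of $d_X^2(F(x_0),\cdot)$, which is smooth inside the injectivity radius), and because $\cos\sqrt u=1-u/2+u^2/24-\cdots$ is a smooth function of $u=r^2$, the composition $\phi$ is smooth on all of $M(x_0)$; this removes the apparent non-smoothness of $r$ at the points of $F^{-1}(F(x_0))$.

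Next I compute the surface Laplacian using the standard decomposition $\Delta_M(f|_M)=\mathrm{tr}_{T_pM}\mathrm{Hess}_X f+2H\langle \nabla_X f,N\rangle$. The sectional curvature upper bound $K_X\leq 1$ and the Hessian Comparison Theorem (applicable since $r\leq \pi/4$ is well inside the injectivity radius and the conjugate radius of the model sphere) give $\mathrm{Hess}_X r\geq \cot r\,(g_X-dr\otimes dr)$. Differentiating $\phi=\cos r$ and using $\sin r\geq 0$ to preserve the direction of the inequality produces $\mathrm{Hess}_X\phi\leq -\cos r\,g_X$, hence $\mathrm{tr}_{T_pM}\mathrm{Hess}_X\phi\leq -2\cos r$; combined with $|H|\leq 1$ and $|\nabla_X\phi|=\sin r$ this yields the key estimate
\[
\Delta_M\phi\leq -2\cos r+2\sin r=-2(\cos r-\sin r).
\]
On $[0,\pi/4]$, $\cos r-\sin r\geq 0$ with equality only at $r=\pi/4$, so $\Delta_M\phi\leq 0$ on $M(x_0)$ and $\Delta_M\phi<0$ on the non-empty open set $\{r<\pi/4\}$, which contains $x_0$. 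Since $M(x_0)$ is a closed surface, the divergence theorem gives $\int_{M(x_0)}\Delta_M\phi\,dA=0$, contradicting the strict negativity of the same integral. The parenthetical claim $\partial B_M(x_0,\pi/4)\neq\varnothing$ then follows from $d_M\geq d_X\circ F$ (immersions are distance non-increasing in the path-length sense) and the connectedness of $M(x_0)$.

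The step I expect to be the main obstacle is aligning the sharp constants so that the critical radius is exactly $\pi/4$: the value $\cot(\pi/4)=1$ coming from the Hessian comparison matches precisely the mean-curvature bound $|2H|\leq 2$, causing the pointwise estimate to degenerate to $0$ at $r=\pi/4$. This forces me to avoid a direct pointwise maximum-principle argument at the extrinsic maximum of $r$ (which would only yield $r_{\max}\geq\pi/4$, consistent with the closed-ball containment) and instead to exploit the compactness and emptiness of boundary of $M(x_0)$ to integrate, extracting a strict inequality from the non-empty set $\{r<\pi/4\}$.
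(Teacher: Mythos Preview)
Your proof is correct and takes a genuinely different route from the paper. The paper argues by a touching/barrier comparison: since $K_X\leq 1$, the geodesic sphere $\partial B_X(x_0,r)$ has all principal curvatures $\geq\cot r>1$ for $r<\pi/4$; if $M(x_0)$ were trapped in $\overline B_X(x_0,\pi/4)$, then at a farthest point the principal curvatures of $M$ would dominate those of the sphere, forcing $|H|>1$, a contradiction. The borderline case $r_1=\pi/4$ is then handled separately via the strong maximum principle for the mean curvature operator. Your approach replaces this pointwise touching argument by an integrated one: the Hessian comparison $\mathrm{Hess}_X\phi\leq -(\cos r)g_X$ together with $|2H|\,|\nabla_X\phi|\leq 2\sin r$ makes $\phi=\cos r$ superharmonic on $M(x_0)$, with strict inequality near $x_0$, and the divergence theorem on the closed surface yields the contradiction. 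The advantage of your argument is that it disposes of the delicate endpoint $r=\pi/4$ in one stroke, without invoking the strong maximum principle; the paper's argument, on the other hand, is slightly more elementary in that it only uses first-order comparison of principal curvatures at a single point rather than the full Hessian comparison and the Laplacian restriction formula.
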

\begin{remark}
{\rm
Observe that if the lemma holds,  then the extrinsic diameter of $M$
is greater than $\pi/4$ (in particular, the intrinsic diameter has the same lower bound),
which proves the second statement in item~\ref{It0} of
Theorem~\ref{main2}.
}
\end{remark}
\begin{proof}[Proof of Lemma~\ref{lemma3.1}]
Fix a point $x_0\in M$ and let $r\in (0,\pi/4)$. Since the
injectivity radius of $X$ is at least 1, all the distance spheres
$\partial B_X(x_0,r)$ with $r\in (0,1)$ are geodesic spheres.
By comparison results and since the absolute sectional curvature of $X$
is bounded by 1, the second fundamental form of
$\partial B_X(x_0,r)$ has normal curvatures greater than $1$.
Assume that $M(x_0)$ is contained in $\ov{B}_X(x_0,r)$. As $M(x_0)$
is compact, then there exists a largest $r_1\in (0,r]$ such that
$M(x_0)\subset \overline{B}_X(x_0,r_1)$, and there exists $x\in
M(x_0)\cap \partial B_X(x_0,r_1)$. This implies that all the normal
curvatures of $M$ at $x$ are greater than $1$, which implies that
the mean curvature of $M$ is greater than 1, which contradicts
that $F\colon M\la X$ lies in $\Lambda (I,1,1,1,1)$.
This contradiction proves that $M(x_0)$ cannot be contained in
$\overline{B}_X(x_0,r)$. Since this holds for every $r\in (0,\pi/4)$
and $M$ is compact, we conclude that $M(x_0)$ cannot be contained in
$B_X(x_0,\pi/4)$. In fact, $M(x_0)$ cannot be contained in $\overline{B}_X(x_0,\pi/4)$
(otherwise the maximum principle for the mean curvature operator would imply that
$M(x_0)=\partial B_X(x_0,r)$, which contradicts that $x_0\in M(x_0)$).
Now the lemma is proved.
\end{proof}

Using~\cite[Proposition~2.5 and item~3 of Remark~2.5]{mpe20} with $R_1=a=H_0=1$,
for each $p\in \Int(M)$ we have
\begin{equation}
\label{yaulemma0}
\mbox{\rm Area}[B_M(p,r)]\geq E(r):=\pi\, r^2 e^{-2r-1+r\cot(r)}
\quad \mbox{for every }r\in (0,\pi/4].
\end{equation}
Therefore, since $M\not\subset \ov{B}_X(p,\pi/4)$ by
Lemma~\ref{lemma3.1}, the extrinsic diameter of $M$ is greater than
$\pi/4$ and $\mbox{Area}(M)>
\mbox{Area}[B_M(p,\pi/4)]=E(\pi/4)\approx 0.325043$.
 This completes the proof of item~\ref{It0} of Theorem~\ref{main2}.

\subsection{Proof of item~\ref{It1a} of Theorem~\ref{main2}}
\label{sec3.3}
Consider an element $(F\colon M\la X)\in \L(I, 1,1,1,1)$.
If $M$ is non-compact, then the last sentence in item~\ref{itF} of
Theorem~\ref{mainStructure} states that $M$ has infinite area,
which vacuously implies item~\ref{It1a} of the theorem holds
(for any choice of $C_1(I)$). Henceforth, assume $M$ is compact.

Let $M=M_1\cup \ldots \cup M_b$, $b\in \N$, be the decomposition 
of $M$ in connected components. Assume inequality~\eqref{1.1} holds for each
$M_i$ with respect to a constant $C_1=C_1(I)$. Since the index of
each $M_i$ is at most $I$, then 
\begin{equation}\label{3.3b}
\mbox{Area}(M)=\sum_{i=1}^b\mbox{Area}(M_i)\geq 
\sum_{i=1}^bC_{1}(g(M_i)+1)=C_{1}(g(M)+b)\geq C_{1}(g(M)+1)
\end{equation}
where $g(M_i)$ is the genus of $M_i$.
Hence, it suffices to prove that~\eqref{1.1} holds under
the additional assumption that $M$ is connected, which we will assume henceforth.

The region  $\widetilde{M}\subset M$ defined in item~\ref{it7} of
Theorem~\ref{mainStructure} for the space $\L(I, 1,1,1,1)$ produces
a uniform bound $A_1=A_1(I)\geq 1$ from above on
the norm the second fundamental form of $\wt{M}$. Let us define \begin{equation}
\label{3.3a}
K_1=K_1(I):=-1-\frac{1}{2}A_1^2.
\end{equation}
Since $A_1\geq 1$,  then  $K_1\leq -\frac32$.
The Gauss equation gives
\begin{equation}
K=K_X(TM)+\det(A_M),
\label{3.1}
\end{equation}
where $K$ denotes the Gaussian curvature of $M$ and $K_X(TM)$
is the sectional curvature of $X$ for the tangent plane to $M$.
Since the absolute sectional curvature of $X$ is bounded by 1,
$H^2\geq \det(A)$ and $H\in[0,1]$, we have the following
upper and lower estimates for $K$ in $\wt{M}$:
\begin{equation}
K_1\leq -1-\frac{1}{2}|A_M|^2\leq -1+\det(A_M)\leq K\leq 1+\det(A_M)
\leq 1+H^2\leq 2.
\label{3.2}
\end{equation}

\subsubsection{Item~\ref{It1a} holds when $k=0$.}
We first show that item~\ref{It1a}
of Theorem~\ref{main2} holds in the special case that the integer $k$
defined in Theorem~\ref{mainStructure} is zero. To see
this, observe that $\wt{M}=M$, and thus, \eqref{3.2} ensures that
$M$  has Gaussian curvature
bounded from below by $K_1$ and from above by 2.
Let $\wh{M}$ be the orientable cover of $M$.

Suppose $g(M)=0$ (recall that $M$ was assumed to be compact and
connected). Applying to $\wh{M}$ the Gauss-Bonnet theorem, we have
\[
2\cdot \mbox{Area}(\wh{M})\geq \int_{\wh{M}}K =4\pi.
\]

If $M$ is non-orientable, then $\mbox{Area}(M)=
\frac{1}{2}\mbox{Area}(\wh{M})\geq \pi$. This
inequality also holds in the case $M$ is orientable (in fact,
$M=\wh{M}$ and so, $\mbox{Area}(M)=\mbox{Area}(\wh{M})\geq 2\pi$).
Therefore, inequality~\eqref{1.1} holds with $C_1(I)=\pi$ if $g(M)=0$ and
$k=0$.

Suppose now that $g(M)\geq 2$.
Hence, Gauss-Bonnet applied to $\wh{M}$ gives
\[
-K_1\cdot \mbox{Area}(\wh{M})\geq -\int_{\wh{M}}K
=-2\pi \chi(\wh{M})=4\pi(g(\wh{M})-1)=4\pi(g(M)-1)\geq \frac{4\pi}{3}(g(M)+1).
\]

If $M$ is non-orientable, then $\mbox{Area}(M)=
\frac{1}{2}\mbox{Area}(\wh{M})\geq \frac{2\pi}{3|K_1|}(g(M)+1)$.
This inequality also holds in the case $M$ is orientable (in fact,
$M=\wh{M}$ and thus, $\mbox{Area}(M)=\mbox{Area}(\wh{M})\geq
\frac{4\pi}{3|K_1|}(g(M)+1)$). Therefore, inequality~\eqref{1.1} holds
with $C_1(I)=\frac{2\pi}{3|K_1|}$ if $g(M)\geq 2$ and $k=0$.

By the already proven item~\ref{It0} of Theorem~\ref{main2}, the area of $M$ is at least $C_A$.
In particular
if $g(M)=1$ (i.e., $M$ is a torus or a Klein bottle), then one can still
obtain a lower bound estimate for the area of $M$ by
$$\mbox{Area}(M)\geq  C_A=\frac{C_A}{2}(g(M)+1).$$
Therefore, inequality~\eqref{1.1} holds with $\ds C_1(I)=\frac{C_A}{2}$
if $g(M)=1$ and $k=0$.

Finally we consider the minimum of the constants
$\pi, \frac{2\pi}{3|K_1|}, \frac{C_A}{2}$ obtained
in the three cases above. As observed previously,  $|K_1|\geq \frac32$, and so,
\[
C_3=C_3(I)= \min\{\pi, \frac{2\pi}{3|K_1|}, \frac{C_A}{2}\}= \min\{\frac{2\pi}{3|K_1|}, \frac{C_A}{2}\},
\]
we deduce that~\eqref{1.1} holds with $C_3$, instead of $C_1$, for connected compact $M$ when $k=0$.

\subsubsection{Item~\ref{It1a} holds when $k\geq 1$.}

Assume that  $k\geq 1$ (in particular, $I\geq 1$)
and we will  obtain a constant
$C_4=C_4(I)\in (0, \pi \de_1^2)$ that satisfies
\begin{equation}
\mbox{Area}(M)\geq C_4(g(M)+1),
\label{3.3}
\end{equation}
which will complete the proof of
item~\ref{It1a} of Theorem~\ref{main2}
after setting $C_1(I)=\min\{C_3(I),C_4(I)\}$.
We will need the following two claims.

\begin{claim}
	\label{claim3.1}
If $g(M)\geq 12I-3$, then inequality~\eqref{3.3} holds with constant $C'_4(I)=\frac{\pi}{|K_1(I)|}$.
\end{claim}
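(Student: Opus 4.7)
The plan is to leverage the pointwise Gaussian curvature bound $K\geq K_1$ on the low-curvature region $\widetilde{M}$ (already at our disposal from~(3.2)) to convert a Gauss--Bonnet estimate on $\widetilde{M}$ into an area bound on all of $M$. Since $-K\leq |K_1|$ on $\widetilde M$,
\[
\mathrm{Area}(M)\;\geq\;\mathrm{Area}(\widetilde{M})\;\geq\;\frac{-\int_{\widetilde M}K}{|K_1|},
\]
so the claim reduces to showing $-\int_{\widetilde M}K\geq \pi(g(M)+1)$ under the hypothesis $g(M)\geq 12I-3$.

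To estimate $-\int_{\widetilde M}K$ from below I would apply Gauss--Bonnet to $\widetilde M$ (still valid when $\widetilde M$ is non-orientable, since $K$ and $\kappa_g$ are intrinsic scalars), obtaining $-\int_{\widetilde M}K=-2\pi\chi(\widetilde M)+\kappa(\widetilde M)$. Because the inward conormal of $\widetilde M$ along $\partial\widetilde M=\bigsqcup_{i=1}^{k}\partial\Delta_i$ is opposite to that of the $\Delta_i$, one has $\kappa(\widetilde M)=-\sum_{i=1}^{k}\kappa(\Delta_i)$, and~(2.3a) gives $\kappa(\widetilde M)\geq -2\pi S-\tfrac{\tau k}{2}$. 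For the Euler characteristic, inclusion--exclusion applied to $M=\widetilde M\cup\bigcup_i\Delta_i$ (whose overlap is a disjoint union of circles, hence of zero Euler characteristic) yields $\chi(\widetilde M)=\chi(M)-\chi(\bigcup_i\Delta_i)$, and combining this with $\chi(\bigcup_i\Delta_i)\geq -6I+2S+e$ from item~D.c produces
\[
-\int_{\widetilde M}K\;\geq\;-2\pi\chi(M)-12\pi I+2\pi S+2\pi e-\tfrac{\tau k}{2}.
\]

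The remaining step is to extract the right coefficient of $g(M)$. I would use the elementary lower bounds $S\geq 2k$ and $e\geq k$ (which follow from $m(i)\geq 2$ and $e(i)\geq 1$ in item~D.d), together with $k\geq 1$, $k\leq I$, and $\tau=\pi/10$, so that the $S$-, $e$- and $\tau$-terms combine to a controlled positive contribution $(6\pi-\tfrac{\tau}{2})k$. Substituting $\chi(M)=2-2g(M)$ (orientable) or $\chi(M)=1-g(M)$ (non-orientable, with $g(M)$ the genus of the oriented double cover) yields a linear inequality in $g(M)$ whose balance is dictated by the non-orientable case. The orientable case comfortably gives the bound for all $g(M)\gtrsim 4I$; the non-orientable case, in which $-2\pi\chi(M)=2\pi g(M)-2\pi$, produces the inequality $\pi g(M)\geq 12\pi I-3\pi$ (up to a $\tau/(2\pi)$ correction), which is exactly the hypothesis $g(M)\geq 12I-3$.

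The main subtlety is precisely the non-orientable case: because $|\chi(M)|$ is essentially halved when $M$ is non-orientable compared with its oriented double cover, the same Gauss--Bonnet pipeline gives only half the coefficient of $g(M)$, which is what forces the stated threshold $12I-3$ rather than the much smaller threshold that the orientable case alone would allow. Care must therefore be taken to run the argument with $\chi(M)=1-g(M)$ throughout the non-orientable case and to use the sharper inequalities $S\geq 2k$, $e\geq k$ (rather than just $S,e\geq 0$) to absorb the $-12\pi I$ deficit without losing the exact constant in the hypothesis.
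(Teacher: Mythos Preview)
Your approach is essentially the paper's: both compute $-\int_{\widetilde M}K$ via Gauss--Bonnet, express it through $\chi(M)$, $\chi(\cup_i\Delta_i)$ and the boundary geodesic curvature, and then invoke item~B(d) and~(2.3a). The paper writes $\int_{\widetilde M}K=\int_M K-\int_{\cup_i\Delta_i}K$ and applies Gauss--Bonnet to each piece, while you apply it directly to $\widetilde M$ and use $\chi(\widetilde M)=\chi(M)-\chi(\cup_i\Delta_i)$; these are the same computation.

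There is, however, a genuine numerical gap. Your bounds $S\geq 2k$ and $e\geq k$ give only $S+e\geq 3k$, and with that the non-orientable case yields
\[
-\int_{\widetilde M}K\;\geq\;2\pi g-2\pi-12\pi I+2\pi(S+e)-\tfrac{\tau k}{2}\;\geq\;2\pi g-12\pi I+\tfrac{79\pi}{20},
\]
so that $-\int_{\widetilde M}K\geq \pi(g+1)$ requires $g\geq 12I-\tfrac{59}{20}=12I-2.95$, i.e.\ $g\geq 12I-2$. This misses the stated threshold $g=12I-3$ by exactly one; the ``$\tau/(2\pi)$ correction'' you dismiss actually pushes the threshold \emph{up}, not down. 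The paper closes this gap by using the sharper inequality $S+e\geq 4k$, which follows because for every $i$ one has $m(i)+e(i)\geq 4$: indeed $e(i)\geq 1$, $m(i)\geq 2$, and if $e(i)=1$ then $m(i)\geq 3$ (see item~B(a) of Theorem~\ref{mainStructure}). With $S+e\geq 4k$ and $k\geq 1$ the same chain gives $g\geq 12I-\tfrac{99}{20}$, comfortably covering $g\geq 12I-3$. So your outline is correct, but you must replace $S\geq 2k$, $e\geq k$ by the combined estimate $S+e\geq 4k$ to reach the claimed constant.
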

\begin{claim}
\label{claim3.2}
If  $g(M)< 12I-3$. then inequality~\eqref{3.3} holds with $C''_4(I)=\frac{C_A}{12I-3}$. 
\end{claim}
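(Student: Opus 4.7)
The plan is to observe that Claim~\ref{claim3.2} is essentially a direct consequence of item~\ref{It0} of Theorem~\ref{main2} combined with the hypothesis that $g(M)$ is not too large. Since the normalization of Section~\ref{sec3.1} reduces matters to the space $\Lambda(I,1,1,1,1)$ (so $\l=1$), item~\ref{It0} applied to the compact connected surface $M$ yields the universal lower bound
\[
\mathrm{Area}(M)\geq C_A,
\]
independent of $I$ and $k$.

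Next, I would exploit the fact that $g(M)\in \N\cup\{0\}$, so the strict inequality $g(M)<12I-3$ is equivalent to $g(M)\leq 12I-4$, and hence
\[
g(M)+1\leq 12I-3.
\]
Combining these two observations gives
\[
\mathrm{Area}(M)\geq C_A = \frac{C_A}{12I-3}\,(12I-3)\geq \frac{C_A}{12I-3}\,\bigl(g(M)+1\bigr),
\]
which is exactly inequality~\eqref{3.3} with constant $C''_4(I)=\frac{C_A}{12I-3}$.

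There is no substantive obstacle here: the content of the claim is simply that when the genus is bounded above in terms of $I$, the uniform area lower bound already proven in item~\ref{It0} automatically dominates a linear function of $g(M)+1$ with a slope depending only on $I$. The only point to be careful about is that $I\geq 1$ (guaranteed by the standing assumption $k\geq 1$ in this portion of Section~\ref{sec3.3}), so that $12I-3\geq 9>0$ and the constant $C''_4(I)$ is well-defined and positive. Once Claim~\ref{claim3.1} is in hand to cover the complementary range $g(M)\geq 12I-3$, taking $C_4(I)=\min\{C'_4(I),C''_4(I)\}$ completes the proof of~\eqref{3.3} in the case $k\geq 1$.
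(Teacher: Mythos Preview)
Your proposal is correct and follows essentially the same approach as the paper: both invoke the already-proven item~\ref{It0} to get $\mathrm{Area}(M)\geq C_A$, then use the bound $g(M)+1\leq 12I-3$ to convert this into the linear estimate~\eqref{3.3} with constant $C''_4(I)=\frac{C_A}{12I-3}$. Your write-up is slightly more explicit (noting $g(M)\in\N\cup\{0\}$ and checking $I\geq 1$ so the constant is positive), but the argument is the same.
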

\begin{proof}[Proof of Claim~\ref{claim3.1}]
We start applying the Gauss-Bonnet formula and~\eqref{3.2}:
\begin{equation}
|K_1|\cdot \mbox{\rm Area}(\widetilde{M})\geq
\left| \int_{\widetilde{M}}K\right| =
\left|\int _{{M}}K -\int _{\cup_{i=1}^k\Delta_i}K\right|.
\label{3.4}
\end{equation}
On the other hand, calling $g=g(M)$,
\begin{eqnarray}
	\int _{{M}}K -\int _{\cup_{i=1}^k\Delta_i}K
	&= &
	2\pi (\chi({M})-2\pi\chi(\cup_{i=1}^k\Delta_i)
	+\int_{\partial (\cup_{i=1}^k\Delta_i)}  \kappa_g	
	\nonumber
	\\
	&\leq& 2\pi (1-g)+2\pi(6I-2S-e) +2\pi S+\tau k,
	\label{3.5}
\end{eqnarray}
where in the last inequality we have used item~\ref{it4}(d) of
Theorem~\ref{mainStructure} and~\eqref{2.3a}.
Since $\tau\leq \pi/10$
in Theorem~\ref{mainStructure} and $S+e\geq 4k$ (this last inequality
follows since $e(i)\geq 1$ and $m(i)\geq 2$, and if $e(i)=1$
then $m(i)\geq 3$), we can bound~\eqref{3.5} from above by
$2\pi (1-g+6I-4k+\frac{k}{20})$, which in turn is at most
$2\pi (-g+6I-2)$ because $k\geq 1$. Therefore,
\begin{equation}
	\int _{{M}}K -\int _{\cup_{i=1}^k\Delta_i}K
	\leq -\pi (2g-12I+4).
	\label{3.6}
\end{equation}
Since $g\geq 12I-3$ by hypothesis, then the RHS of~\eqref{3.6} is at most
$-\pi (g+1)$, and thus, we conclude that
\begin{equation}
\int _{{M}}K -\int _{\cup_{i=1}^k\Delta_i}K\leq -\pi (g+1).
\label{3.7}
\end{equation}
Now, \eqref{3.7} and \eqref{3.4} give
\begin{equation}
|K_1|\cdot \mbox{\rm Area}(\widetilde{M})\geq \pi(g+1),
\label{3.9}
\end{equation}
from where Claim~\ref{claim3.1} follows.
\end{proof}

\begin{proof}[Proof of Claim~\ref{claim3.2}]
By the already proven item~\ref{It0} of Theorem~\ref{main2}, we have
$\mbox{Area}(M)\geq C_A$,
which is $\geq \frac{C_A}{12I-3}(g(M)+1)$ 
$g(M)<12I-3$. This finishes the proof of Claim~~\ref{claim3.2}.
\end{proof}

Once Claims~\ref{claim3.1} and \ref{claim3.2} are proved, we will
conclude that inequality~\eqref{3.3} holds in all cases with $k\geq 1$
with $C_4(I)=\min\{C'_4(I),C''_4(I)\}$. This completes the proof of
item~\ref{It1a} of Theorem~\ref{main2}.


\subsection{A preliminary result on area estimates of balls in $M$ and its diameter}
\label{sec3.4}
We temporarily pause the proof of Theorem~\ref{main2} to state and prove the next auxiliary  result, which gives
general upper estimates on the areas of balls of radius $r$ in $M$ and general upper estimates
on the diameter of $M$ in terms of constants described
in the Hierarchy Structure Theorem~\ref{mainStructure}.  The next theorem will be crucial in the proofs of the remaining items 2 and 3
of Theorem~\ref{main2}. The proof of Theorem~\ref{main22} will be
given in Sections~\ref{sec3.5}, \ref{sec3.6} and \ref{sec3.7}.
\begin{theorem}[Area estimates for intrinsic
	balls and diameter estimates for $M$]
	\label{main22}
	For $r_0>0$, $K_0, H_0\geq 0$, consider all
	complete Riemannian  3-manifolds $X$ with injectivity radius
	$\Inj(X)\geq r_0$ and absolute sectional curvature bounded from above by
	$K_0$, and let $\lambda=\max\{1,\frac{1}{r_0},\sqrt{K_0}, H_0\}$.
	Let $M$ be a complete immersed $H$-surface in $X$ with empty boundary,
	$H\in [0,H_0]$, index at most~$I\in \N\cup \{ 0\}$ and genus $g(M)$,
	which in the language of Theorem~\ref{mainStructure} implies
	$M\in \Lambda =\L(I, H_0, r_0, 1, K_0)$ with additional chosen
	constant $\tau=\pi/10$. Then:
	
	\begin{enumerate}
		\item Suppose that one of the following two conditions holds:
		\begin{enumerate}[(i)]
			\item $I=0$, i.e., $M$ is stable.
			\item $I\geq 1$ and $k=0$ with the notation of
			Theorem~\ref{mainStructure} (in particular, $M=\wt{M}$).
		\end{enumerate}
		Depending on whether condition (i) or (ii) holds, we introduce the
		following constant $K_1=K_1(I)$.
		If condition (i) holds, let  $C_s\geq 2\pi$ be the
		universal curvature estimate for stable $H$-surfaces described in
		Theorem~\ref{stableestim1s} below, and let $K_1:=
		-1-\frac{1}{2} C_s^2$. If condition (ii) holds, let
		$K_1=K_1(I)=-1-\frac{1}{2}A_1^2$ where $A_1=A_1(I)\geq 1$ is the constant
		given by Theorem~\ref{mainStructure} for the space $\Lambda(I,1,1,1,1)$.
		
		For all $x\in M$ and $r>0$,
		\begin{equation}
			\mbox{\rm Area}(B_M(x,r))\leq
			\frac{2\pi}{-K_1\l^2}\left[ \cosh \left( \l \sqrt{-K_1}r\right)
			-1\right],
			\label{1.2a}
		\end{equation}
		and if $M$ is connected, then
		\begin{equation}
			\mbox{\rm Diameter}(M)\geq
			\frac{1}{\sqrt{K_1}\l}\arccosh \left[\frac{-K_1C_1(I)}{2\pi}
			(g(M)+1)+1\right].
			\label{1.5a}
		\end{equation}
		
		\item Suppose $k\geq 1$ (in particular, $I\geq 1$).
		Let $\de=\de(I)\in (0,\frac{1}{2}]$ be the
		constant described in Theorem~\ref{mainStructure} for the space
		$\L(I, 1, 1, 1, 1)$, and let $\Delta_1,\ldots ,\Delta_k$, $k\leq I$,
		be the smooth compact domains associated to $M$ introduced in item~\ref{it1} of
		Theorem~\ref{mainStructure}. There exists $A_3(I) \geq 6$, independent of
		$M,r_0,K_0,H_0$, such that for all $x\in M$  and all $r > 0$, the following estimates hold:
		\begin{equation}
			\mbox{\rm Area}(B_M(x,r)\setminus \cup_{i=1}^k\Delta_i)\leq
			\frac{2(6\pi+1)}{\l^2A_3(I)}I\left[ 2[\cosh(\l \sqrt{A_3(I)}r)-1]
			+\frac{\sinh(\l \sqrt{A_3(I)}r)}{A_3(I)^{1/2}}\right],
			\label{1.4a}
		\end{equation}
		\begin{equation}
			\mbox{\rm Area}(B_M(x,r))\leq
			\frac{I}{\l^2}\left[ \frac{2(6\pi+1)}{A_3(I)}\left(
			2[\cosh(\l \sqrt{A_3(I)}r)-1]
			+\frac{\sinh(\l \sqrt{A_3(I)}r)}{A_3(I)^{1/2}}\right)
			+\frac{3\pi}{8}\right],
			\label{1.4b}
		\end{equation}
		and if $M$ is connected, then
		\begin{equation}
			\mbox{\rm Diameter}(M)\geq
			\frac{1}{\l \sqrt{A_3(I)}}\arccosh \left[\frac{C_1(I)}{20 I}(g(M)+1)\right].
			\label{1.5'}
		\end{equation}
	\end{enumerate}
\end{theorem}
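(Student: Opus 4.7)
The plan is to reduce to the normalized space $\L(I,1,1,1,1)$ via the scaling in Section~\ref{sec3.1} (absorbing the factor $\l$), and then to establish each part in two steps: a Bishop-Cheeger-Gromov-type area comparison on the region of $M$ where the Gaussian curvature is controlled through the Gauss equation, followed by an inversion of that comparison using the linear lower bound $\mathrm{Area}(M)\geq C_1(I)(g(M)+1)$ supplied by item~\ref{It1a} of Theorem~\ref{main2}.

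For Part~1, under either hypothesis~(i) or~(ii) the norm $|A_M|$ is bounded everywhere on $M$ by a constant depending only on $I$ (by $C_s$ from the stable curvature estimate Theorem~\ref{stableestim1s} in case~(i), and by $A_1(I)$ from item~\ref{it7} of Theorem~\ref{mainStructure} applied with $\wt M = M$ in case~(ii)). Combining with $|K_X(TM)|\leq 1$ via the Gauss equation exactly as in~\eqref{3.1}--\eqref{3.2}, we get the global pointwise lower bound $K\geq K_1$ on $M$. The classical two-dimensional Bishop-Cheeger-Gromov area comparison then yields~\eqref{1.2a} directly. For the diameter inequality~\eqref{1.5a}, if $M$ is compact we have $M=B_M(x,\mathrm{Diameter}(M))$ for any $x\in M$, so comparing~\eqref{1.2a} with the lower bound $\mathrm{Area}(M)\geq C_1(I)(g(M)+1)$ and applying $\arccosh$ to the resulting inequality gives~\eqref{1.5a}; if $M$ is non-compact then $\mathrm{Diameter}(M)=\infty$ and the estimate is automatic.

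For Part~2, we decompose $B_M(x,r) = [B_M(x,r)\cap \wt M]\cup \bigl[B_M(x,r)\cap\bigcup_{i=1}^k\Delta_i\bigr]$. On $\wt M$, item~\ref{it7} of Theorem~\ref{mainStructure} gives $|A_M|<A_1(I)$, so the Gauss equation again yields $K\geq K_1(I)$ on $\wt M$. However, $\wt M$ now carries a boundary $\partial\wt M=\sqcup_{i=1}^k\partial\Delta_i$ whose total geodesic curvature is controlled by item~\ref{it4}(e) and \eqref{2.3a}, with $S\leq 3I$ by items~\ref{it4}(b) and~\ref{it2} and $k\leq I$. Setting $A(s)=\mathrm{Area}(B_M(x,s)\cap \wt M)$, the coarea formula gives $A'(s)=\mathrm{length}(\partial B_M(x,s)\cap \wt M)$ for a.e.\ $s$, while Gauss-Bonnet applied to $B_M(x,s)\cap\wt M$ combined with the interior curvature bound and the boundary geodesic curvature bound produces a differential inequality of the form $A''(s)\leq A_3(I)\,A(s)+c_1(I)$ with $A(0)=0$ and $A'(0)\leq c_2(I)$, where $c_1(I),c_2(I)$ are linear in $I$. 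Solving this inhomogeneous linear ODE comparison yields precisely the hybrid $\cosh+\sinh$ estimate~\eqref{1.4a}. To obtain~\eqref{1.4b} we add a uniform upper bound on $\mathrm{Area}\bigl(\bigcup_{i=1}^k\Delta_i\bigr)$: each $\Delta_i$ sits inside the extrinsic ball $\ov B_X(F(p_i),r_F(i))$ with $r_F(i)\leq\tfrac{1}{4}$ and has bounded topology ($g(\Delta_i),e(i),m(i)$ linear in $I(\Delta_i)$ by item~\ref{it4}); applying Gauss-Bonnet on $\Delta_i$ (using $K\leq 1+H^2\leq 2$) together with the transition-annulus description in item~\ref{it3} bounds each $\mathrm{Area}(\Delta_i)$ by a universal constant, and summing over the at most $k\leq I$ pieces produces the $\frac{I}{\l^2}\cdot\frac{3\pi}{8}$ contribution. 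Finally, \eqref{1.5'} follows by inserting $r=\mathrm{Diameter}(M)$ into~\eqref{1.4b}, comparing with $\mathrm{Area}(M)\geq C_1(I)(g(M)+1)$, absorbing the subdominant terms into the constants, and inverting the dominant $\cosh(\l\sqrt{A_3(I)}\,r)$ factor via $\arccosh$.

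The hardest step will be deriving the differential inequality for $A(s)$ in Part~2: the region $B_M(x,s)\cap\wt M$ can change topology as $s$ grows (new boundary components of $\wt M$ become enclosed), and the cut locus of $x$ in the non-complete surface-with-boundary $\wt M$ may interact with $\partial \wt M$ in delicate ways that must be reconciled with the Gauss-Bonnet accounting. The choice $\tau=\pi/10$ in Theorem~\ref{mainStructure} is exactly what makes the total geodesic curvature bound~\eqref{2.3a} sharp enough to close the ODE comparison with constants $A_3(I),c_1(I),c_2(I)$ depending only on $I$ and independent of the remaining geometric data $r_0,K_0,H_0$ after normalization. Matching the explicit numerical constants $\tfrac{2(6\pi+1)}{A_3(I)}$ and $\tfrac{3\pi}{8}$ appearing in~\eqref{1.4a} and~\eqref{1.4b} will require careful bookkeeping of the upper bounds $e(i)\leq 3I-2$, $m(i)\leq 3I-1$, and $\chi(\Delta_i)\geq -6I(\Delta_i)+2m(i)+e(i)$ from item~\ref{it4}.
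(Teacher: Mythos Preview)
Your Part~1 is exactly the paper's argument: normalize to $\L(I,1,1,1,1)$, obtain the global bound $K\geq K_1$ from the Gauss equation together with the relevant second-fundamental-form estimate ($C_s$ or $A_1(I)$), apply Bishop--Cheeger--Gromov, and invert against the linear lower bound $\mathrm{Area}(M)\geq C_1(I)(g(M)+1)$.

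For Part~2 you take a genuinely different route. The paper does \emph{not} run a Gauss--Bonnet ODE on $A(s)=\mathrm{Area}(B_M(x,s)\cap\wt M)$. Instead it first reduces, by a short triangle-inequality argument (Lemma~\ref{lema3.8}), to estimating the collar $\wt M(r)=\{y\in\wt M:d_{\wt M}(y,\partial\wt M)\leq r\}$, and then bounds $\mathrm{Area}(\wt M(r))$ by a Heintze--Karcher/Fermi-coordinate comparison with explicit hyperbolic annuli having matching boundary length and geodesic curvature (Proposition~\ref{prop3.11} and Claim~\ref{claim3.8a}). In that model the $\cosh$-term of~\eqref{1.4a} arises from the total geodesic curvature $\kappa(\wt M)$ and the $\sinh$-term from the \emph{length} $L$ of $\partial\wt M$, bounded separately via $L(\partial\Delta_i)\leq(2\pi m(i)+1)r_F(i)$ from~\cite{mpe18}. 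Your ODE $A''\leq A_3A+c_1$ with $A(0)=0$ cannot produce the $\sinh$-contribution: for every $x\in M$ one has $A'(0)=0$ (the intersection $B_M(x,s)\cap\wt M$ is either a small disk, a half-disk, or empty for small $s$), so your claimed $A'(0)\leq c_2(I)$ is just $c_2=0$ and the comparison solution is pure $\cosh-1$. The paper's collar reduction also cleanly sidesteps the cut-locus and topology-change issues you correctly identify as the hard step; the factor $A_3(I)=-4K_1(I)$ (rather than $-K_1$) records the doubling $r\mapsto 2r$ in that reduction.

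There is a concrete gap in your bound for $\mathrm{Area}(\cup_i\Delta_i)$. Gauss--Bonnet with $K\leq 2$ gives $2\,\mathrm{Area}(\Delta_i)\geq\int_{\Delta_i}K=2\pi\chi(\Delta_i)-\int_{\partial\Delta_i}\kappa_g$, a \emph{lower} bound on area, not the upper bound you need; the transition-annulus description does not repair this, since the portion of $\Delta_i$ inside $B_X(F(p_i),r_F(i)/2)$ has unbounded $|A_M|$. The paper does not re-derive anything here: it simply quotes item~\ref{it8} of Theorem~\ref{mainStructure}, which already gives $\mathrm{Area}(\cup_i\Delta_i)\leq 2\pi\sum_i m(i)r_F(i)^2$, and then uses $r_F(i)\leq\de/2\leq 1/4$ and $S\leq 3I$ to obtain $\mathrm{Area}(\cup_i\Delta_i)\leq 2\pi S\,\de^2/4\leq \pi S/8\leq 3\pi I/8$.
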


\subsection{The proof of item~1 of Theorem~\ref{main22}}
\label{sec3.5}
It is worth recalling some aspects related to curvature estimates for
complete stable $H$-surfaces  $\S$ (possibly with boundary)
in complete Riemannian 3-manifolds of absolute sectional
curvature at most 1 and injectivity radius at most 1; such curvature
estimates are independent on the value of the (constant) mean curvature.
Rosenberg, Toubiana and Souam~\cite[Main Theorem]{rst1}
proved that  there exists a universal constant
$C'_s>0$  such that if $\S$ is two-sided, then for any point $p\in \S$
of distance at least 1 from $\partial \S$, then $|A_M|(p)\leq C'_s$.
In~\cite{mpe18}, we generalized this curvature estimate to
include the case where $M$ is not necessarily two-sided.
Namely, we proved the following statement.

\begin{theorem}[Curvature estimate for stable $H$-surfaces~\cite{mpe18}]
\label{stableestim1s}
There exists $C''_s\geq 2\pi$ such that given $K_0>0$ and
a complete Riemannian 3-manifold $(Y,g)$ of bounded sectional curvature
$|K|\leq K_0$, then for any immersed
one-sided stable minimal surface $M\la Y$ and for any $p\in M$,
\begin{equation}
\label{eqcurvestim}
|A_{M}|(p)\leq \frac{C''_s}{\min \{ \Inj_Y(p),d_{M}(p,\partial M),
\frac{\pi}{2\sqrt{K_0}}\} }.
\end{equation}

Let $C_s:=\max\{C'_s,C''_s\}$. Given $\ve_0>0$, $K_0\geq 0$, if $X$ is a
complete Riemannian 3-manifold with injectivity radius at
least $\ve_0$ and bounded sectional curvature $|K|\leq K_0$,
and $F\colon M\la X$ is a stable $H$-immersion,
then
\begin{equation}
\label{eqcurvestim2}
|A_{M}|(p)\leq \frac{C_s}{\min \{ \ve_0,d_{M}(p,\partial M),
\frac{\pi}{2\sqrt{K_0}}\} }.
\end{equation}
\end{theorem}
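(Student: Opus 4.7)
The plan is a standard blow-up/contradiction argument that reduces the one-sided stable minimal case to the classification of complete stable one-sided minimal surfaces in $\R^3$. The second inequality then follows by combining this with the Rosenberg--Souam--Toubiana theorem (the source of $C'_s$).

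For the first inequality, after rescaling the metric of $Y$ it suffices to produce some constant $C''_s\geq 2\pi$ with the property that any one-sided stable minimal immersion $F\colon M\la Y$ into a complete Riemannian 3-manifold with $\Inj(Y)\geq 1$ and $|K|\leq 1$ satisfies $|A_M|(p)\leq C''_s$ for every $p\in M$ with $d_M(p,\partial M)\geq 1$. Suppose no such constant exists. Then I extract sequences $(Y_n,g_n)$ with these normalizations, one-sided stable minimal immersions $F_n\colon M_n\la Y_n$ and points $p_n\in M_n$ with $d_{M_n}(p_n,\partial M_n)\geq 1$ and $\lambda_n:=|A_{M_n}|(p_n)\to\infty$. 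A Choi--Schoen type point-picking inside intrinsic balls of radius $1/2$ around $p_n$ lets me further assume $|A_{M_n}|\leq 2\lambda_n$ on an intrinsic ball around $p_n$ whose radius $r_n$ satisfies $\lambda_n r_n\to\infty$.

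I then rescale the ambient metrics by $\lambda_n^2$. In the rescaled geometry the sectional curvatures are bounded by $\lambda_n^{-2}\to 0$, injectivity radii diverge, the rescaled intrinsic distance from $p_n$ to $\partial M_n$ becomes large, and the rescaled surfaces are still one-sided stable minimal immersions with $|\wt A_{M_n}|(p_n)=1$ and uniformly bounded second fundamental form on fixed intrinsic balls around $p_n$. Standard smooth compactness for minimal immersions with second fundamental form bounds (elliptic regularity applied to the minimal surface system expressed in ambient harmonic coordinates, in the sense of Definition~\ref{defharm}) yields, after passing to a subsequence, smooth convergence on compact sets to a complete immersed minimal surface $F_\infty\colon M_\infty\la\R^3$ with $|A_{M_\infty}|(p_\infty)=1$. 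The one-sidedness of $M_\infty$ and its stability (defined through $\tau$-anti-invariant variations on the two-sided cover as in Definition~\ref{DefIndexNO}) are inherited from the $M_n$, by lifting the argument to the two-sided covers, where the deck transformations $\tau_n$ are ambient isometries that subconverge, and by using Fatou applied to the stability quadratic form.

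The classification of complete stable minimal surfaces in $\R^3$ (Fischer-Colbrie--Schoen, do Carmo--Peng, Pogorelov in the two-sided case, and Ros in the one-sided case) implies that every such surface is totally geodesic, contradicting $|A_{M_\infty}|(p_\infty)=1$. This yields the first inequality after replacing the produced constant by its maximum with $2\pi$. For the second inequality, rescaling normalizes $\Inj(X)$ and $K_0$; if $F$ is two-sided the Rosenberg--Souam--Toubiana theorem applies with constant $C'_s$ (valid for arbitrary $H\geq 0$), while if $F$ is one-sided then it must be minimal and the first inequality applies with constant $C''_s$. Taking $C_s:=\max\{C'_s,C''_s\}$ gives the unified estimate. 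I expect the main obstacle to be the clean preservation of one-sided stability under the blow-up limit, since stability is encoded through the deck-transformation action on the two-sided cover and the compactness argument must be lifted accordingly before invoking the $\R^3$ classification.
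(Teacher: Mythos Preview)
The paper does not contain a proof of this theorem: it is quoted from~\cite{mpe18} and used as a black box, so there is no ``paper's own proof'' to compare against. Your outline is the standard blow-up argument one would expect for such a curvature estimate, and the ingredients you invoke (point-picking, harmonic-coordinate compactness, and the Fischer-Colbrie--Schoen/do Carmo--Peng/Pogorelov/Ros classification in $\R^3$) are the right ones; you also correctly flag the passage of one-sided stability to the limit as the only step requiring care.
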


Consider an element $(F\colon M\la X)\in \L(I=0, 1,1,1,1)$, in particular,
$M$ is stable. Particularizing~\eqref{eqcurvestim2} to the case $\ve_0=1$,
$\partial M=\varnothing$, $K_0=1$, we get that
$|A_M|\leq C_s$ in $M$. By the same argument using the Gauss equation
as in~\eqref{3.3a} and \eqref{3.2}, we deduce that the Gaussian
curvature $K$ of $M$ satisfies $K\geq K_1:=-1-\frac{1}{2}C_s^2$
in $M$. In this setting, the Bishop-Cheeger-Gromov relative volume
comparison theorem (see e.g.~\cite[Lemma 36]{pet1}) implies that for every
$x\in M$ and $r>0$,
\begin{equation}\label{3.4a}
\mbox{Area}(B_M(x,r))\leq \mbox{Area}(\B_{K_1}(r))=
\frac{2\pi}{-K_1}\left[ \cosh\left(\sqrt{-K_1}r\right)-1
\right],
\end{equation}
where $\B_{K_1}(r)$ denotes the metric ball of radius $r$ in
the hyperbolic plane of curvature $K_1<0$. This finishes the proof
of inequality~\eqref{1.2a} provided that $I=0$ (that is assuming
condition 1(i) in Theorem~\ref{main22} holds).

Now suppose $(F\colon M\la X)\in \L(I, 1,1,1,1)$, $I\geq 1$.
To prove~\eqref{1.2a} provided that condition 1(ii) in 
Theorem~\ref{main22}
holds, observe that in this case $M=\widetilde{M}$. By~\eqref{3.2},
$K\geq K_1$ where $K_1=-1-\frac{1}{2}A_1^2$
and $A_1=A_1(I)\geq 1$ is given by Theorem~\ref{mainStructure} for
the space $\Lambda(I,1,1,1,1)$. Applying the above arguments to this
new choice of the constant $K_1$, we get that~\eqref{3.4a} holds,
which proves \eqref{1.2a} provided that 1(ii) in Theorem~\ref{main22}
holds.

In order to show~\eqref{1.5a} (regardless of whether
condition 1(i) or 1(ii) in Theorem~\ref{main22} holds),
assume $M$ is connected. Observe that we
can assume that $M$ is compact (otherwise its diameter is infinite by
the argument in the first paragraph of Section~\ref{sec3.2}
and~\eqref{1.5a} holds vacuously). Choose a point $x\in M$.
Taking $r=\mbox{Diameter}(M):=D$ in~\eqref{3.4a} and using
the already proven inequality~\eqref{1.1}, we get
\[
C_1(I)(g(M)+1)\leq \mbox{Area}(M)=\mbox{Area}(B_M(x,D))
\stackrel{\eqref{3.4a}}{\leq}
\frac{2\pi}{-K_1}\left[ \cosh\left(\sqrt{-K_1}D\right)-1
\right],
\]
or equivalently,
\[
D\geq \frac{1}{\sqrt{K_1}}\arccosh \left[\frac{-K_1C_1(I)}{2\pi}
(g(M)+1)+1\right],
\]
which proves inequality~\eqref{1.5a}, and so, finishes the proof of
item~1 of Theorem~\ref{main22}.

\subsection{Area growth of  collar neighborhoods of $\wt{M}$ if $k\geq 1$}
\label{sec3.6}

\begin{definition}
{\rm
For a complete surface $\S$ with boundary $\partial \S$ and for any
$r>0$, let
\[
\S(r)=\{x\in \S\mid d_\S(x,\partial \S)\leq r\}
\]
be the collar neighborhood of $\partial \S$ in $\S$ of radius $r$.}
\end{definition}

Consider an element $(F\colon M\la X)\in \L(I, 1,1,1,1)$.
Assume $k\geq 1$ with the notation of Theorem~\ref{mainStructure}.
Hence, $\wt{M}$ is a surface with smooth boundary.
For later uses, next we will give an upper estimate for the area growth
of the collar neighborhood $\wt{M}(r)$ of $\wt{M}$, $r>0$.

\begin{proposition} \label{prop3.11}
Let $c_1,\ldots,c_e$ be the set of components of
$\partial \wt{M}$. Choose for each $i\in \{1,\ldots,e\}$ a
parametrization by arc length $\g_i\colon [0,L_i] \to c_i$
with associated geodesic curvature function
$\kappa_i(t)$ with respect to the inward pointing unit conormal
vector $\eta=\eta(t)$ of $\wt{M}$ along $\partial \wt{M}$.
Then, for each $i\in \{1,\ldots,e\}$:
\begin{enumerate}
\item $\kappa_i(t)$ is negative in $[0,L_i]$.
\item There exists a complete annulus $\S_i$ with boundary,
with constant Gaussian curvature $K_1$ (this constant is defined
in~\eqref{3.3a}), whose boundary is parameterized by arc length by
$\wh{\g}_i=\wh{\g}_i(t)
\colon [0,L_i]\to \partial \S_i$, and such that the geodesic curvature
function $\wh{\kappa}_i$ of $\partial \S_i$ with respect to the inward
pointing unit conormal vector of $\S_i$ along $\partial \S_i$ satisfies
$\wh{\kappa}_i(t)=\kappa_i(t)$ for all $t\in [0,L_i]$.
Furthermore, $\S_i$ is unique up to isometry.
\item \label{it3prop3.7}
For each $r>0$ we have
\newline
\begin{equation}
\mbox{\rm Area}(\ov{}\wt{M}(r))\leq \sum_{i=1}^e\mbox{\rm Area}(\S_i(r))
=\kappa(\wt{M})\frac{1-\cosh(\sqrt{-K_1}r)}{-K_1}
+L\frac{\sinh(\sqrt{-K_1}r)}{(-K_1)^{3/2}}.
\label{3.14a}
\end{equation}
where $\kappa_g(t)=\sum_{i=1}^e\kappa_i(t)$  and $\kappa(\wt{M})=\int_{\partial \wt{M}}\kappa_g$  
is the total geodesic curvature of $\partial M$ with respect to the
inward pointing unit conormal vector of $\wt{M}$ along $\partial \wt{M}$, and
$L=\sum_{i=1}^eL_i$ is the length of $\partial \wt{M}$.
\end{enumerate}
\end{proposition}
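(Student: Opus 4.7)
The plan is to construct, for each boundary component $c_i$ of $\wt M$, a Jacobi-field model---an abstract annulus $\S_i$ of constant Gaussian curvature $K_1$ whose boundary is a unit-speed copy of $c_i$ with the same pointwise geodesic curvature---and then to deduce the area estimate by Sturm comparison in Fermi coordinates along $\partial\wt M$, using that by \eqref{3.2} the Gaussian curvature of $\wt M$ is bounded below by $K_1$.

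For item~1 (pointwise negativity of $\kappa_i$), I would argue locally from item~\ref{it3} of Theorem~\ref{mainStructure}. The boundary $\partial\Delta_i$ is the outer boundary of the multi-graph $G_{i,h}$ over the planar disk $\D_h$ of radius $2r_F(i)$ in the harmonic chart around $F(p_i)$, with graphing function $u$ satisfying $|u|/|x|+|\nabla u|\le \tau=\pi/10$. In the (almost-Euclidean, $C^{1,\alpha}$-controlled) metric on the chart, the lifted circle of radius $r_F(i)$ in the $m$-sheeted cover has intrinsic geodesic curvature $(1/r_F(i))(1+O(\tau))$ with respect to the inward conormal of $\Delta_i$, since small $C^1$ perturbations of a circle remain close to it in geodesic curvature. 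Because $r_F(i)\le \de/2\le 1/4$ while $\tau\le \pi/10$, this curvature is strictly positive; reversing the conormal (since $\wt M$ is the complement of $\Delta_i$) yields $\kappa_i(t)<0$ for every $t\in[0,L_i]$.

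For item~2, I would construct $\S_i$ as the warped product $[0,\infty)_s\times c_i$ (with $c_i$ realized as $\R/L_i\Z$) with metric $ds^2+g_i(s,t)^2\,dt^2$, where $g_i$ is the unique solution of the Jacobi equation $\partial_s^2 g_i+K_1 g_i=0$ with $g_i(0,t)=1$ and $\partial_s g_i(0,t)=-\kappa_i(t)$. Explicitly,
\[
g_i(s,t)=\cosh(\sqrt{-K_1}\,s)-\frac{\kappa_i(t)}{\sqrt{-K_1}}\sinh(\sqrt{-K_1}\,s),
\]
which is strictly positive for all $s\ge 0$ since $-\kappa_i(t)>0$. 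Then $\S_i$ is a smooth complete annulus of constant Gaussian curvature $K_1$; the direct computation at $s=0$ shows that its boundary is unit-speed with geodesic curvature $\kappa_i(t)$ with respect to the inward conormal $\partial_s$. Uniqueness up to isometry is the usual fact that a constant-curvature metric is rigidly determined by its boundary data via this warped-product ODE.

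For item~3, in Fermi coordinates $(s,t)$ along each $c_i$ in $\wt M$, the induced metric has the form $ds^2+\phi_i(s,t)^2\,dt^2$ with $\phi_i$ satisfying $\partial_s^2\phi_i+K(s,t)\phi_i=0$ and the same initial data as $g_i$, where $K(s,t)\ge K_1$ is the Gaussian curvature of $\wt M$. Sturm comparison gives $0<\phi_i\le g_i$ on the interior of the injectivity domain of the normal exponential map from $\partial\wt M$. Since the cut locus has measure zero and overlapping Fermi fibers only decrease the actual area, summing over $i$ yields
\[
\mbox{Area}(\wt M(r))\le\sum_{i=1}^{e}\int_0^{L_i}\!\!\int_0^r\phi_i(s,t)\,ds\,dt\le\sum_{i=1}^{e}\int_0^{L_i}\!\!\int_0^r g_i(s,t)\,ds\,dt=\sum_{i=1}^{e}\mbox{Area}(\S_i(r)),
\]
and integrating the explicit formula for $g_i$ and summing produces the right-hand side of \eqref{3.14a}. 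The main obstacle is item~1: the multi-graph controls from Theorem~\ref{mainStructure} are extrinsic ($C^{1,\alpha}$ in the harmonic chart), while the geodesic curvature of $\partial\Delta_i$ is intrinsic to $M$, so one must carefully convert between the ambient and the induced metric and keep the pointwise perturbation strictly smaller than the leading $1/r_F(i)$ term.
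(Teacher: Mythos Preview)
Your proposal is correct and takes a genuinely different, more direct route than the paper for items~2 and~3. For item~2 the paper realizes $\kappa_i$ as the geodesic curvature of an arc $\alpha$ in $\H^2(K_1)$ via the fundamental theorem of plane curves, pushes $\alpha$ out by the normal exponential map, and then glues the two straight geodesic sides by a hyperbolic isometry to close up the annulus; your abstract warped product $ds^2+g_i(s,t)^2\,dt^2$ with the explicit Jacobi solution accomplishes the same thing without ever leaving the intrinsic picture, and makes the uniqueness statement immediate from ODE uniqueness. For the area formula in item~3 the paper works in the hyperboloid model of $\H^2(-1)$, computes the geodesic curvature of the equidistant curve $\alpha_r$ explicitly (your $\kappa_r$ would be $-g_s/g$ at $s=r$), applies Gauss--Bonnet to the ``rectangle'' bounded by $\alpha$, $\alpha_r$ and two orthogonal geodesics, and then rescales to curvature $K_1$; your direct integration of $g_i$ over $[0,r]\times[0,L_i]$ is shorter and avoids both the Lorentzian model and the rescaling step. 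For the inequality the paper invokes the Bishop--Cheeger--Gromov relative volume comparison, which is precisely the Sturm/Heintze--Karcher argument you spell out in Fermi coordinates, so here the two proofs coincide in substance. Finally, for item~1 the paper does not argue as you sketch but simply cites a lemma from the companion paper~\cite{mpe18}; your outline of the perturbative estimate from the multi-graph bound~\eqref{estimu} is the right idea, and your caveat about converting the extrinsic $C^{1,\alpha}$ control in the harmonic chart into a pointwise intrinsic bound on $\kappa_i$ is exactly the work that lemma carries out.
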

\begin{proof}
Item~1 follows from the proof of the Hierarchy Structure Theorem~\ref{mainStructure} in~\cite{mpe18};
specifically see Lemma~6.4 in~\cite{mpe18}.
		
Item~2 follows directly from the following two facts. First,
given $\ell>0$ and a smooth  function
$\kappa\colon [0,\ell]\to (-\infty,0)$, standard geometry of curves
in the hyperbolic plane $\H^2(K_1)$ with constant Gaussian curvature $K_1$
ensures that there exists a smooth unit speed curve
$\a\colon [0,\ell]\to \H^2(-K_1)$ such that $\kappa(t)$ is the
geodesic curvature of $\a$ at $\a(t)$, for all $t\in [0,\ell]$;
furthermore, $\a$ is unique up to isometries of $\H^2(K_1)$.
Second, if $n\colon [0,\ell]\to U\H^2(K_1)$ is the unit normal vector
to $\a$ pointing to its non-convex side (here $U\H^2(K_1)$ denotes
the unit tangent bundle to $\H^2(K_1)$), then the map
$\phi\colon [0,\ell]\times [0,\infty)\to \H^2(K_1)$
\begin{equation}
\label{3.15a}
\phi (t,r)=\exp _{\a(t)}(r\, n(t)),\quad (t,r)\in [0,\ell]\times
[0,\infty)
\end{equation}
is a submersion, where $\exp \colon U\H^2(K_1)\to \H^2(K_1)$ is the
exponential map. $\phi$ induces a hyperbolic metric $g_h$ on
$[0,L]\times[0,\infty)$ so that for each $t_0\in [0,\ell]$, the
curve of the form $r\in [0,\infty )\mapsto \phi(t_0,r)$ is a unitary
geodesic orthogonal to the arc $\a([0,L])$ at $\a(t_0)$;
in particular, after identifying the two geodesic
arcs $\phi(\{ 0\}\times [0,\infty))$ and
$\phi(\{ \ell \}\times [0,\infty))$ by a hyperbolic isometry,
we obtain a quotient hyperbolic annulus $(\S_{\a},g_h)$
with the properties desired in item~2, in the special case that
$\ell =L_i$ and $\kappa =\kappa _i$.
		
To prove item~\ref{it3prop3.7} of the proposition,
first observe that since $K\geq -K_1$ on $\wt{M}(r)$ by~\eqref{3.2},
we can use relative volume comparison arguments
see e.g.~\cite[Lemma 36]{pet1}) to deduce that
\[
\mbox{Area}(\wt{M}(r))\leq \sum_{i=1}^e\mbox{Area}(\S_i(r)).
\]
It remains to prove that for all $i=1,\ldots ,e$ and $r>0$, the following holds
\begin{equation}
\label{3.17a}
\mbox{Area}(\S_i(r))=\frac{1}{-K_1}
\left[ (1-\cosh(\sqrt{-K_1}r))\int_0^{L_i}\kappa(s)\, ds
+\frac{L_i}{\sqrt{-K_1}} \sinh(\sqrt{-K_1}r)\right].
\end{equation}
\begin{claim}\label{claim3.8a}
Let $\a\colon [0,\ell]\to \H^2(-1)$ a smooth arc parameterized by arc
length, with negative geodesic curvature function $\kappa=\kappa(s)$.
Consider the complete hyperbolic annulus with boundary
$(\S_{\a},g_h)$ constructed in~\eqref{3.15a} in terms of $\a$
with $K_1=-1$.
Given $r>0$, let $\a_r\colon [0,\ell]\to \S_{\a}$ be the equidistant arc
to $\a$ at distance $r$ on the non-convex side of $\a$. Then:
\begin{enumerate}
\item The geodesic curvature function $\kappa_r$ of $\a_r$ is given by
\begin{equation}\label{3.18c}
\kappa_r(s)=\frac{\kappa(s)-\tanh(r)}{1-\tanh(r)\kappa(s)},
\quad \forall s\in [0,\ell].
\end{equation}
\item Let $\S_{\a}(r)\subset
\S_{\a}$ be the domain enclosed by $\a,\a_r$ and the two
geodesics of $\S_{\a}$ that join the extrema of
$\a,\a_r$ (so that these geodesics are orthogonal to both $\a,\a_r$ at
their extrema). Then,
\begin{equation}\label{3.19j}
\mbox{\rm Area}(\S_{\a}(r))=(1-\cosh(r))\int_0^{\ell}\kappa(s)\, ds+
\ell \sinh(r),
\end{equation}
where $\ell=L(\a)$ is the length of $\a$.
\item If we replace $\H^2(-1)$ by $\H^2(K_1)$, then~\eqref{3.19j}
becomes
\begin{equation}\label{3.20b}
\mbox{\rm Area}(\S_{\a}(r))=\frac{1}{-K_1}
\left[ (1-\cosh(\sqrt{-K_1}r))\int_0^{\ell}\kappa(s)\, ds
+\frac{\ell}{\sqrt{-K_1}} \sinh(\sqrt{-K_1}r)\right].
\end{equation}
\end{enumerate}
\end{claim}
\begin{proof}[Proof of the claim]
Recall that $\S_{\a}$ submerses into $\Hip^2(-1)$
through the map $\phi$ given in~\eqref{3.15a}. In particular, $\S_{\a}
\setminus \partial \S_{\a}$ is locally isometric to $\Hip^1(-1)$.
This property clearly allows us to prove the claim assuming that
$\S_{\a}(r)$ embeds into $\Hip^2(-1)$:
for item~1 of the claim this is obvious, while for
items~2 and 3 we can divide $[0,\ell]$ into a partition
$0=s_0<s_1<\ldots <s_n=\ell$ such that if we denote by
$\a_i=\a|_{[s_{i-1},s_i]}$, $i=1,\ldots ,n$ and we apply the same
procedure as with $\S_{\a}$ to construct $n$ ``rectangles'' $\S_{\a_i}(r)$, then each $\S_{\a_i}(r)$ embeds into $\Hip^2(-1)$.
In this way, both equations~\eqref{3.19j} and~\eqref{3.20b}
will follow by adding up the corresponding equalities over the
rectangles $\S_{\a_1}(r),\ldots ,\S_{\a_n}(r)$, which only intersect
along geodesics in their boundaries. Therefore,
for the remainder of this proof we will assume that $\S_{\a}(r)$ is
embedded in $\Hip^2(-1)$.

We will use the model of $\Hip^2(-1)$ as the upper sheet of a hyperboloid
in the Lorentz-Minkowski space $\LM^3=(\R^3,\langle ,\rangle_L
=dx_1^2+dx_2^2-dx_3^2)$. In this model, $\Hip^2(-1)=
\{ x\in \LM^3\ | \ \langle x,x\rangle_L=-1,\ x_3>0\}$, and the induced
metric by $\langle ,\rangle_L$ on $\Hip^2(-1)$ is positive definite and
has constant Gaussian curvature $-1$. Given
$x\in \Hip^2(-1)$, the tangent plane $T_x\Hip^3$ identifies to
$\langle x\rangle ^{\perp}\subset \LM^3$. Given $x\in \Hip^2(-1)$
and $v\in \langle x\rangle ^{\perp}$, the unique geodesic in $\Hip^2(-1)$
with initial conditions $\g(0)=x$, $\g'(0)=v$ is
\begin{equation}
\label{3.18b}
\g(t)=\g(t,x,v)=\cosh(|v|t))x+\frac{\sinh(|v|t)}{|v|}v,
\end{equation}
and the parallel transport along $\g(\cdot, x,v)$ from $0$ to $t$ is
given by
\begin{equation}
\label{3.18a}
\tau_0^t\colon T_x\Hip^2(-1)=\langle x\rangle ^{\perp}
\to T_{\g(t)}\Hip^2=\langle \g(t)\rangle ^{\perp},
\quad \tau_0^t(w)=w+\frac{\langle v,w\rangle_L}{|v|^2}(\dot{\g}(t)-v),
\end{equation}
where $\dot{\g}(t)=\frac{d\g}{dt}$.
	
Given $s\in [0,\ell]$, the equidistant curve $\a_r$ is
\[
\a_r(s)=\exp_{\a(s)}(r\mathcal{R}\a'(s))=\g(r,\a(s),\mathcal{R}\a'(s))
=\cosh(r)\a(s)+\sinh(r)\mathcal{R}\a'(s),
\]
where $\mathcal{R}$ is the rotation of angle $\pi/2$ in each tangent
plane to $\Hip^2(-1)$ so that $\mathcal{R}\a'$ points to the non-convex
side of $\a$. $\a_r'(s)=J_s(r)$ is the value at $t=r$ of the unique
Jacobi field $J_s=J_s(t)$ along the geodesic
$t\mapsto \g(t,\a(s),J\a'(s))$ with
initial conditions
\begin{equation}
\label{3.19a}
J_s(0)=\a'(s),\quad \frac{DJ_s}{dt}(0)=\frac{D(\mathcal{R}\a')}{ds}(s)
=-\kappa(s)\a'(s),
\end{equation}
where $\{ \a',n_{\a}=\mathcal{R}\a'\}$ is the Frenet dihedron for $\a$.
Since both $J_s(0),\frac{DJ_s}{dt}(0)$ are orthogonal to $\dot{\g}(0)$,
we deduce that $J_s(t)$ is everywhere orthogonal to $\dot{\g}(t)$. In
particular, $J_s(t)=f_s(t)\tau_0^t(\a'(s))$, where $f_s(t)$
is a solution of the ODE $\ddot{f}-f=0$. Imposing \eqref{3.19a},
we have $f_s(0)=1$, $\dot{f_s}(0)=-\kappa(s)$, and thus
\begin{equation}\label{3.21}
f_s(t)=\cosh(t)-\kappa(s)\sinh(t).
\end{equation}
Therefore, an orthogonal basis of the tangent and normal line
to $\a_r$ is
\begin{eqnarray}
\a_r'(s)&=&J_s(r)=f_s(r)\tau_0^r(\a'(s))
\stackrel{\eqref{3.18a}}{=}f_s(r)\a'(s), \hspace{2cm} \mbox{(not
necessarily unitary)}\nonumber
\\
n_{\a_r}(s)&=&\tau_0^r(n_{\a}(s))\stackrel{\eqref{3.18a}}{=}n_{\a}(s)=
\mathcal{R}\dot{\g}(r)=\sinh(r)\a(s)+\cosh(r)\mathcal{R}\a'(s), \quad \mbox{(unitary)}\nonumber
\end{eqnarray}
Hence by the Frenet equations for $\a_r$, we will obtain the negative of
the geodesic curvature $\kappa_r(s)$ of $\a_r$ by taking the derivative
w.r.t. $s$ to $n_{\a_r}(s)$ and dividing by $|\a_r'(s)|=f_s(r)$:
\[
-\kappa_r(s)=\frac{\frac{d}{ds}(n_{\a_r}(s))}{f_s(r)}=
\frac{\sinh(r)-\cosh(r)\kappa(s)}{\cosh(r)-\kappa(s)\sinh(r)},
\]
which proves the first item of the claim.

As for item 2, observe that the interior of
$\S_{\a}(r)$ is topologically a disk,
and that $\partial \S_{\a}(r)$ contains four cusps, in each of which
the exterior angle to $\S_{\a}(r)$ along its boundary is $\pi/2$.
Applying the Gauss-Bonnet Theorem to $\S_{\a}(r)$, we get
\begin{equation}\label{3.23b}
0=-\mbox{Area}(\S_{\a}(r))+\int_{\a}\kappa+\int_{\a_r}\kappa_r
=-\mbox{Area}(\S_{\a}(r))+\int_0^{\ell}\kappa(s)\, ds
-\int_{0}^{\ell}\kappa_r(s)\, ds.
\end{equation}
Using \eqref{3.18c},
\begin{eqnarray}
\int_0^\ell\kappa_r(s)\, ds
&=&\int_{0}^{\ell}\frac{\kappa(s)-\tanh(r)}{1-\tanh(r)\kappa(s)}|\a_r'(s)|
\,ds
\stackrel{\eqref{3.21}}{=}\int_{0}^{\ell}(\cosh(r)\kappa(s)-\sinh(r))\,ds
\nonumber
\\
&=&\cosh(r)\int_0^{\ell}\kappa(s)\, ds-\sinh(r)\ell.\label{3.24'}
\end{eqnarray}
\eqref{3.23b} and \eqref{3.24'} give~\eqref{3.19j}, which finishes
the proof of item~2 of the claim. Item~3 follows from~\eqref{3.19j}
after  an elementary rescaling argument.
\end{proof}
Equation~\eqref{3.17a} follows directly from~\eqref{3.20b} with the
obvious change of notation $\S_i(r)=\S_{\a}(r)$, $L_i=\ell$.
This finishes the proof of Proposition~\ref{prop3.11}.
\end{proof}
	
\subsection{Proof of item 2 of Theorem~\ref{main22}}
\label{sec3.7}
Consider an element $(F\colon M\la X)\in \L(I, 1,1,1,1)$.
Assume $k\geq 1$ (hence $I\geq 1$)
with the notation of Theorem~\ref{mainStructure}.
By~\eqref{3.14a}, we have for each $r>0$
\begin{equation}\label{3.24a}
\mbox{\rm Area}(\ov{}\wt{M}(r))\leq f(r),
\end{equation}
where $f$ is the increasing function
\[
f(r)=\frac{\kappa(\wt{M})}{K_1}[\cosh(\sqrt{-K_1}r)-1]+L\frac{\sinh(\sqrt{-K_1}r)}{(-K_1)^{3/2}}.
\]
\begin{lemma}\label{lema3.8}
Given $x\in M$ and $r>0$, we have
\[
\mbox{\rm Area}\left[ B_M(x,r)\setminus (\cup_{i=1}^k\Delta_i)\right]
\leq f(2r).
\]
\end{lemma}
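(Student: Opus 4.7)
The plan is to split the argument by cases according to whether the intrinsic distance from $x$ to $\partial\wt{M}$ exceeds $r$, and in each case relate the set $B_M(x,r)\setminus(\cup_{i=1}^k\Delta_i)\subset\wt{M}$ either to a collar of $\partial\wt{M}$ (so Proposition~\ref{prop3.11} applies) or to a geodesic disk in the interior of $\wt{M}$ (so Bishop--Cheeger--Gromov applies). In both cases the resulting bound must be shown to be at most $f(2r)$.

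Case A: $d_M(x,\partial\wt{M})\le r$. First I would show that $B_M(x,r)\setminus(\cup_{i=1}^k\Delta_i)\subset\wt{M}(2r)$. Given $y$ in this set, pick $p\in\partial\wt{M}$ with $d_M(x,p)\le r$, so that $d_M(p,y)<2r$. A minimizing geodesic in $M$ from $y\in\wt{M}$ to $p\in\partial\wt{M}$ must cross $\partial\wt{M}$ at some first time; up to that first hitting time the subarc lies in $\wt{M}$ and has length less than $2r$, whence $d_{\wt{M}}(y,\partial\wt{M})<2r$. Applying (\ref{3.14a}) of Proposition~\ref{prop3.11} with radius $2r$ then gives $\mbox{Area}(B_M(x,r)\setminus\cup_{i=1}^k\Delta_i)\le\mbox{Area}(\wt{M}(2r))\le f(2r)$.

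Case B: $d_M(x,\partial\wt{M})>r$. Then $B_M(x,r)$ cannot cross $\partial\wt{M}$, so it lies entirely inside $\wt{M}$ or entirely inside one component of $\cup_{i=1}^k\Int(\Delta_i)$; in the latter subcase the set is empty and nothing needs to be shown. In the former, $B_M(x,r)=B_{\wt{M}}(x,r)$ is a metric disk in the interior of $\wt{M}$, and I would use the curvature lower bound $K\ge K_1$ on $\wt{M}$ from (\ref{3.2}) together with the Bishop--Cheeger--Gromov relative volume comparison theorem (as cited in Section~\ref{sec3.5} via~\cite{pet1}) to obtain
\[
\mbox{Area}(B_{\wt{M}}(x,r))\le\frac{2\pi}{-K_1}\bigl[\cosh(\sqrt{-K_1}\,r)-1\bigr].
\]

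The remaining (and main) step is then to verify that this hyperbolic disk bound is dominated by $f(2r)$. Using $\tau=\pi/10$ and the estimate $|\kappa(\wt{M})+2\pi S|\le\frac{\tau}{2}k$ from~(\ref{2.3a}), combined with the structural fact $m(i)\ge 2$ (so $S\ge 2k\ge 2$), I would deduce $-\kappa(\wt{M})\ge 2\pi S-\frac{\pi}{20}k\ge 2\pi$. Since $\cosh(\sqrt{-K_1}\cdot 2r)\ge\cosh(\sqrt{-K_1}\,r)$ and the $L\sinh(\sqrt{-K_1}\cdot 2r)/(-K_1)^{3/2}$ term of $f(2r)$ is non-negative, the first term of $f(2r)$ already majorizes the Bishop--Cheeger--Gromov bound, completing Case B. The one subtle point — and the place where the exact choice $\tau=\pi/10$ and the ``spinning'' lower bound $m(i)\ge 2$ from Theorem~\ref{mainStructure} really enter — is precisely this final comparison, which confirms that the collar area $f(2r)$ is always at least as large as the interior hyperbolic-disk bound.
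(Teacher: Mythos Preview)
Your proof is correct and follows essentially the same approach as the paper's: a case split reducing either to the collar estimate of Proposition~\ref{prop3.11} or to Bishop--Cheeger--Gromov, followed by the comparison $|\kappa(\wt{M})|>2\pi$ to absorb the hyperbolic-disk bound into $f(2r)$. The only differences are organizational---you split according to whether $d_M(x,\partial\wt{M})\le r$, whereas the paper first separates $x\in\cup_i\Delta_i$ from $x\in\Int(\wt{M})$ and then compares $r$ with $d=d_M(x,\cup_i\Delta_i)$---and your derivation of $-\kappa(\wt{M})\ge 2\pi$ from~\eqref{2.3a} and $S\ge 2k$ is more explicit than the paper's bare assertion.
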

\begin{proof}
Suppose first that $x\in \cup_{i=1}^k\Delta_i$. Then, $B_M(x,r)\subset \wt{M}(r)\cup
\left( \cup_{i=1}^k\Delta_i\right)$, and thus, $B_M(x,r)\setminus \left( \cup_{i=1}^k\Delta_i\right)
\subset \wt{M}(r)$. Hence,
\[
\mbox{Area}\left[ B_M(x,r)\setminus \left(
\cup_{i=1}^k\Delta_i\right)\right]
\leq \mbox{Area}(\wt{M}(r))\stackrel{\eqref{3.24a}}{\leq} f(r)<f(2r).
\]
Now suppose $x\in \mbox{Int}(\wt{M})$ and let $d>0$ be the distance from
$x$ to $\cup_{i=1}^k\Delta_i$. We distinguish two cases, depending on
whether $r\leq d$ or not.

If $r\leq d$, then since $K\geq K_1$ in $\wt{M}$ and $B_M(x,r)\subset \wh{M}$, the Bishop-Cheeger-Gromov relative volume comparison
theorem implies
\[
\mbox{Area}(B_M(x,r))\leq \mbox{Area}(\B_{K_1}(r))=
\frac{2\pi}{-K_1}\left[ \cosh\left(\sqrt{-K_1}r\right)-1\right]
\stackrel{(\star)}{<} f(r)<f(2r),
\]
where $\B_{K_1}(r)$ denotes the metric ball of radius $r$ in
the hyperbolic plane of curvature $K_1$, and in $(\star)$ we have used
that $|\kappa(\wt{M})|>2\pi$.

If $r>d$, then the triangle inequality ensures that $B_M(x,r)\setminus \left( \cup_{i=1}^k\Delta_i\right)
\subset \wt{M}(2r)$, and thus,
\[
\mbox{Area}\left[ B_M(x,r)\setminus \left( \cup_{i=1}^k\Delta_i\right)\right]
\leq \mbox{Area}(\wt{M}(2r))\stackrel{\eqref{3.24a}}{\leq}f(2r),
\]
which finishes the proof of the lemma.
\end{proof}

Now we are ready to prove inequality~\eqref{1.4a}. First, observe
that~\eqref{2.3a} implies that
\[
-2\pi S-\tau I \leq \kappa(\wt{M})\leq -2\pi S+\tau I,
\]
and so,
\begin{equation}\label{3.25}
0<\frac{\kappa(\wt{M})}{K_1}\leq \frac{2\pi S+\tau I}{-K_1}
\stackrel{(\star)}{\leq}\frac{6\pi+\tau}{-K_1}I\leq \frac{6\pi+1}{-K_1}I,
\end{equation}
where in $(\star)$ we have used that $S\leq 3I$ (this follows from
item~\ref{it4} of Theorem~\ref{mainStructure}).

Second, we can estimate from above the length $L$ of $\partial \wt{M}$
as follows: each component $c_j$ of $\partial \wt{M}$ is contained in
the boundary of a certain compact set $\Delta_i$, and the length
$L(\partial \Delta_i)$ of $\partial \Delta_i$
can be estimated from above
using~\cite[item~(C1) of Lemma~6.1]{mpe18} (also
see~\cite[Remark~6.2]{mpe18}) as
\begin{equation}\label{3.26}
L(\partial \Delta_i)\leq (2 \pi m(i)+1)r_F(i)\leq \frac{2 \pi m(i)+1}
{2}\de,
\end{equation}
where $m(i)$ is the total spinning of the boundary of $\Delta_i$
($m(i)$ was introduced in item~\ref{it4} of Theorem~\ref{mainStructure})
and $\de\leq \ve_0/2=1/2$, $\de_1\leq \frac{\de}{2}$, $r_F(i)\in [\de_1,\de/2]$ were introduced in the main
statement of Theorem~\ref{mainStructure}. Adding up~\eqref{3.26} in the
set $\{ \Delta_1,\ldots ,\Delta_k\}$, we get
\begin{equation}\label{3.27}
L\leq \frac{2 \pi S+k}{2}\de \leq \frac{6 \pi I+I}{2}\de =
\frac{(6 \pi +1)\de }{2}I\leq \frac{6 \pi +1}{4}I
\end{equation}
Finally, Lemma~\ref{lema3.8}, \eqref{3.25} and \eqref{3.27} give
\begin{eqnarray}
\mbox{\rm Area}\left[ B_M(x,r)\setminus (\cup_{i=1}^k\Delta_i)\right]
&\stackrel{\mbox{\footnotesize (Lemma~\ref{lema3.8})}}{\leq }&f(2r)=\frac{\kappa(\wt{M})}{K_1}[\cosh(2\sqrt{-K_1}r)-1]
+L\frac{\sinh(2\sqrt{-K_1}r)}{(-K_1)^{3/2}}
\nonumber
\\
&\stackrel{\eqref{3.25},\eqref{3.27}}{\leq }& \frac{6\pi+1}{-K_1}I[\cosh(2\sqrt{-K_1}r)-1]
+\frac{6 \pi +1}{4}I\frac{\sinh(2\sqrt{-K_1}r)}{(-K_1)^{3/2}}
\nonumber
\\
&=&\frac{2(6\pi+1)}{A_3(I)}I\left[ 2[\cosh(\sqrt{A_3(I)}r)-1]
+\frac{\sinh(\sqrt{A_3(I)}r)}{A_3(I)^{1/2}}\right],
\nonumber
\end{eqnarray}
where we have defined $A_3(I)=-4K_1(I  )\geq 6$. 
This proves~\eqref{1.4a}.

In order to see that inequality~\eqref{1.4b} holds,
just observe that by item~\ref{it8} of Theorem~\ref{mainStructure},
\begin{equation}
\mbox{\rm Area}(\cup_{i=1}^k\Delta_i)\leq  2\pi \sum_{i=1}^k m(i)r^2_F(i)
\leq  2\pi S\frac{\de^2}{4}\leq \frac{\pi S}{8}\leq \frac{3\pi}{8}I,
\end{equation}
hence
\begin{eqnarray}
\hspace{-1.1cm}\mbox{\rm Area}\left[ B_M(x,r)\right]
&\leq&
\mbox{\rm Area}\left[ B_M(x,r)\setminus (\cup_{i=1}^k\Delta_i)\right]
+\mbox{\rm Area}(\cup_{i=1}^k\Delta_i)
\nonumber
\\
&\leq& \frac{2(6\pi+1)}{A_3(I)}I\left[ 2[\cosh(\sqrt{A_3(I)}r)-1]
+\frac{\sinh(\sqrt{A_3(I)}r)}{A_3(I)^{1/2}}\right]
+\frac{3\pi}{8}I
\nonumber
\\
&=&
I\left[ \frac{2(6\pi+1)}{A_3(I)}\left( 2[\cosh(\sqrt{A_3(I)}r)-1]
+\frac{\sinh(\sqrt{A_3(I)}r)}{A_3(I)^{1/2}}\right)
+\frac{3\pi}{8}\right], \label{3.29}
\end{eqnarray}
from where one deduces~\eqref{1.4b}.

To finish this section, we prove~\eqref{1.5'}. Let us denote by
$h(r)$ the RHS of~\eqref{3.29}. Then, taking $r=D:=
\mbox{Diameter}(M)$ we have $B_M(x,D)=M$ for any $x\in M$, and so,
\begin{equation}\label{3.30}
C_1(I)(g+1)\stackrel{\eqref{1.1}}{\leq }
\mbox{Area}(M)=\mbox{Area}\left[ B_M(x,D)\right]\leq h(D).
\end{equation}
Since $\sinh(t)\leq \cosh(t)$,
\begin{eqnarray}
h(D)&\leq &I\left[ \frac{2(6\pi+1)}{A_3(I)}\left( 2[\cosh(\sqrt{A_3(I)}D)-1]
+\frac{\cosh(\sqrt{A_3(I)}D)}{A_3(I)^{1/2}}\right)
+\frac{3\pi}{8}\right]
\nonumber
\\
&\stackrel{(A_3(I)\geq 6)}{\leq} & I\left[ \frac{6\pi+1}{3}\left( 2[\cosh(\sqrt{A_3(I)}D)-1]
+\frac{\cosh(\sqrt{A_3(I)}D)}{\sqrt{6}}\right)
+\frac{3\pi}{8}\right] \nonumber
\\
&= & I\left[ \frac{6\pi+1}{3}\left( \left(2+\frac{1}{\sqrt{6}}\right)\cosh(\sqrt{A_3(I)}D)-2
\right)
+\frac{3\pi}{8}\right] \nonumber
\\
&< & I\frac{6\pi+1}{3}\left(2+\frac{1}{\sqrt{6}}\right)\cosh(\sqrt{A_3(I)}D).
\label{3.31}
\end{eqnarray}
\eqref{3.30} and \eqref{3.31} give
\[
C_1(I)(g+1)\leq I\frac{6\pi+1}{3}\left(2+\frac{1}{\sqrt{6}}\right)\cosh(\sqrt{A_3(I)}D),
\]
from where inequality~\eqref{1.5'} follows directly.
This completes the proof of Theorem~\ref{main22}.

\subsection{Proof of item~\ref{It1b} of the Theorem~\ref{main2}}
Consider an element $(F\colon M\la X)\in \L(I, 1,1,1,1)$.
As in previous sections, we may assume that $M$ is compact and
connected, and let $g=g(M)$ be the genus of $M$.

\begin{claim}\label{claim3.10}
If $I=0$, then~\eqref{1.1b} holds with $G(0)=0$.
\end{claim}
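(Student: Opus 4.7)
The plan is to combine the universal stable curvature estimate of Theorem~\ref{stableestim1s} with the Gauss--Bonnet formula on the orientable cover of $M$. In the normalized setting $(F\colon M\la X)\in \L(0,1,1,1,1)$ we have $\lambda=1$, $M$ is stable, and $\partial M=\varnothing$. Applying inequality~\eqref{eqcurvestim2} with $\ve_0=1$ and $K_0=1$ gives $\min\{\ve_0,d_M(p,\partial M),\pi/(2\sqrt{K_0})\}=1$ at every point $p\in M$, hence $|A_M|\leq C_s$ on all of $M$. The Gauss equation then yields $K=K_X(TM)+\det(A_M)\geq -1-\tfrac{1}{2}C_s^2=:K_1$ on $M$, in direct analogy with~\eqref{3.2}--\eqref{3.3a} but with the role of $A_1$ played by $C_s$.

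Since $M$ is compact (otherwise item~\ref{itF} gives $\mbox{Area}(M)=\infty$ and~\eqref{1.1b} holds vacuously), let $\widehat{M}$ denote the orientable cover and write $g=g(M)$. I would split into three subcases. For $g\geq 2$, Gauss--Bonnet on $\widehat{M}$ gives $-\int_{\widehat{M}}K=4\pi(g-1)$, and combining with the lower bound $K\geq K_1$ together with the worst-case identity $\mbox{Area}(M)=\tfrac{1}{2}\mbox{Area}(\widehat{M})$ (attained when $M$ is non-orientable) produces
\[
\mbox{Area}(M)\;\geq\;\frac{2\pi(g-1)}{-K_1}\;\geq\;\frac{2\pi(g+1)}{3(-K_1)}\;=\;\frac{4\pi(g+1)}{3(2+C_s^2)},
\]
using $g-1\geq (g+1)/3$, which holds for $g\geq 2$. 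A short algebraic check then reduces the required inequality $\frac{4\pi}{3(2+C_s^2)}\geq C=\frac{\pi}{3+4C_s+4C_s^2}$ to $12+16C_s+13C_s^2\geq 6$, which is trivially true.

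For the low-genus cases, I would invoke item~\ref{It0} already proven in Section~\ref{sec3.2}. If $g=0$, then $\widehat{M}$ is a sphere, and the upper bound $K\leq 2$ (from the Gauss equation) combined with Gauss--Bonnet forces $\mbox{Area}(\widehat{M})\geq 2\pi$, hence $\mbox{Area}(M)\geq \pi$, and $\pi\geq C$ is clear since $C<1$. If $g=1$, then item~\ref{It0} yields $\mbox{Area}(M)\geq C_A\approx 0.325$, so the required bound $\mbox{Area}(M)\geq 2C$ follows from $2C\leq \frac{2\pi}{3+8\pi+16\pi^2}<C_A$, using $C_s\geq 2\pi$. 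Assembling the three subcases, inequality~\eqref{1.1b} holds in the stable case with $G(0)=0$.

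I do not expect any real obstacle here: the argument is a case split on genus followed by routine Gauss--Bonnet arithmetic and a few numerical comparisons. The specific shape of the constant $C=\pi/(3+4C_s+4C_s^2)$ is dictated by the more delicate case $I\geq 1$, where the transition annuli contribute error terms tracked through items~\ref{it4} and~\ref{it3} of Theorem~\ref{mainStructure}; for $I=0$ one only needs to verify that this same $C$ is admissible, which the computations above accomplish.
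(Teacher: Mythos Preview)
Your proof is correct and follows essentially the same approach as the paper: apply the stable curvature estimate~\eqref{eqcurvestim2} to bound $|A_M|\leq C_s$, use the Gauss equation to get $K\geq -1-\tfrac12 C_s^2$, then Gauss--Bonnet for $g\geq 2$ and item~\ref{It0} for the low-genus cases. The only cosmetic differences are that the paper applies Gauss--Bonnet directly to $M$ (valid in the non-orientable case since $-\chi(M)\geq g-1$ with the paper's genus convention) rather than passing to the orientable cover, and handles $g=0$ together with $g=1$ via item~\ref{It0} instead of your separate $K\leq 2$ argument.
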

\begin{proof}
Since $M$ is stable, we have $|A_M|\leq C_s$ in $M$
by~\eqref{eqcurvestim2}. Thus, the Gauss equation implies that
the Gaussian curvature $K$ of $M$ satisfies $K\geq -1-\frac{1}{2}C_s^2$.
Using the Gauss-Bonnet theorem,
\[
\textstyle{(1+\frac{1}{2}C_s^2)}\mbox{Area}(M)\geq -\int_MK=-2\pi \chi(M)\geq 2\pi (g-1).
\]
Hence,
\[
\mbox{Area}(M)\geq \frac{2\pi}{1+\frac{1}{2}C_s^2}(g-1),
\]
which is strictly bigger than $\frac{\pi}{3+4C_s+4C_2^2}(g+1)$ when
$g\geq 2$. Consequently, \eqref{1.1b} holds whenever $g\geq 2$.
To finish the proof of the claim it remains to check
that~\eqref{1.1b} holds for $g=0,1$, which we do next.
\[
\mbox{Area}(M)\stackrel{\mbox{\footnotesize (item~\ref{It0})}}{\geq }
C_A\stackrel{\mbox{\footnotesize (a)}}{>}\frac{2\pi}{3+4C_s+4C_2^2}\stackrel{\mbox{\footnotesize (b)}}{\geq }\frac{2\pi}{3+4C_s+4C_2^2}(g+1)
\]
where in (a) we have used that $C_s\geq 2\pi$, and in (b) that $g\leq 1$.
Now the claim is proved.
\end{proof}

By Claim~\ref{claim3.10}, it remains to prove item~\ref{It1b} of 
Theorem~\ref{main2} assuming $I\geq 1$.
The additional assumption $g\geq 12I-3$ guarantees,
by \eqref{3.7}, that
\begin{equation}
	\int _{\wt{M}}K	=\int _{{M}}K -\int _{\cup_{i=1}^k\Delta_i}K\leq -\pi (g+1).
	\label{3.7a}
\end{equation}

By Lemma~7.1 in~\cite{mpe18},
there exists a positive constant $\wh{C}_s(1)$, which in our setting is
$1+2C_s$, such that if $\sup |A_M|
>\wh{C}_s(1)$, then there exists a nonempty finite subset
$\{ q_1,\ldots ,q_n\} \subset  M$
with $1\leq n\leq I$, such that
\begin{enumerate}
	\item $|A_M|$ achieves its maximum in $M$ at $q_1$, and for
	$i=2,\ldots ,n$, $|A_M|$ achieves its maximum in
	$M\setminus [B_M(q_1,1)\cup \ldots \cup
	B_M(q_{i-1},1)]$ at $q_i$.
	\item For each $i=1,\ldots ,n$, $|A_M|(q_i)>\wh{C}_s(1)$
	and  the  intrinsic
	balls $B_M(q_i,1/2)$ are pairwise disjoint and unstable.
	\item $|A_M|\leq \wh{C}_s(1)$ in $M\setminus
	[B_M(q_1,1)\cup \ldots \cup B_M(q_{n},1)]$.
\end{enumerate}
We next define a partition of the surface $\wt{M}$ that appears
in item~\eqref{it7} of Theorem~\ref{mainStructure}.
\begin{itemize}
\item If $\sup |A_{M}|\leq \wh{C}_s(1)$, let $\wt{M}_1=\varnothing$ and $\wt{M}_2=\wt{M}$.
\item Otherwise, let
$\wt{M}_1=\wt{M} \cap [\cup_{i=1}^n B_M(q_i,1)]$ and
$\wt{M}_2=\wt{M}\setminus [\cup_{i=1}^n B_M(q_i,1)]$.
\end{itemize}
In particular, the second fundamental form of the surface $\wt{M}_2$
satisfies $|A_{\wt{M}_2}|\leq \wh{C}_s(1)$.

By the discussion around inequality~\eqref{3.2},
the Gaussian curvature function
of $\wt{M}_1$ satisfies $K_{\wt{M}_1}\geq K_1$
(where $K_1=K_1(I)\leq -3/2$ is defined in~\eqref{3.3a}),
and the  Gaussian curvature function of $\wt{M}_2$
satisfies $K_{\wt{M}_2}\geq -1-\frac{1}{2}\wh{C}_s(1)^2$.
Also, by inequalities~\eqref{1.2a} and~\eqref{1.4b},
there exists an explicit function $h\colon \N \to (0,\infty)$
such that
\begin{equation}\label{3.37}
\mbox{Area}[\wt{M}\cap [\cup_{i=1}^n B_M(q_i,1)]
\leq \sum_{i=1}^n\mbox{Area}B_M(q_i,1)\leq h(I).
\end{equation}

Therefore,
\begin{equation}
{\textstyle
K_1(I)h(I)- \left[1+\frac{1}{2}\wh{C}_s(1)^2\right]
\mbox{\rm Area}(\wt{M}_2)}
\stackrel{\eqref{3.37}}{\leq}
	\int_{\wt{M}_1} K_{\wt{M}_1} +\int_{\wt{M}_2}K_{\wt{M}_2}=\int _{\wt{M}}K	\stackrel{\eqref{3.7a}}{\leq} -\pi (g+1).
\end{equation}
Solving for the area of $\wt{M}_2$, we have
\begin{equation}
	\label{3.39}
\mbox{\rm Area}(\wt{M}_2)\geq \frac{\pi(g+1)+K_1(I)h(I)}{1+\frac{1}{2}\wh{C}_s(1)^2}.
\end{equation}
After setting  $C:=\frac{\pi/2}{1+\frac{1}{2}\wh{C}_s(1)^2}=\frac{\pi}{3+4C_s+4C_s^2}$,
we get the estimate
\begin{equation}\label{3.15c}
\mbox{\rm Area}(M)\geq\mbox{\rm Area}(\wt{M})\geq
\mbox{\rm Area}(\wt{M}_2)	\geq
C(g+1)+\frac{\frac{\pi}{2}(g+1)+K_1(I)h(I)}{1+\frac{1}{2}\wh{C}_s(1)^2}.
\end{equation}
Define
\[
G(I):=\max\left\{ 12I-3,\left\lceil \frac{-2K_1(I)h(I)}{\pi}\right\rceil
-1\right\}\in  \N,
\]
where for a real number $x$, we denote by $\lceil x\rceil$ the
smallest integer that is not smaller than $x$ (also known as the
{\it ceiling function} at $x$). Then, whenever $g(M)\geq  G(I)$ we have
that the second term in the RHS of~\eqref{3.15c} is non-negative,
which completes the proof of item~\ref{It1b} of Theorem~\ref{main2}.

\subsection{Proof of item~\ref{It3b} of Theorem~\ref{main2}}
Recall that in Section~\ref{sec3.1} we normalized the space
$\Lambda$, passing from an $H$-immersion $(F\colon M\la X)\in \Lambda
(I,H_0,r_0,1,K_0)$ to the immersion
$(F'\colon M\la X')\in \L(I,1,1,1,1)$,
where $\l$ is given by~\eqref{3.0} and $X'$ is the Riemannian manifold
obtained after scaling the original metric of $X$ by $\sqrt{\l}$.
Observe that $F'$ has mean curvature $H'=H/\l$ and $X'$ has
scalar curvature $\rho'=\rho/\l^2$.

Suppose that the scalar curvature $\rho$ of
$X$ satisfies $3H^2+\frac{1}{2}\rho \geq c$	in $X$ for some $c>0$, where
$H$ is the mean curvature of an immersion $(F\colon M\la X)\in
\L(I,H_0,r_0,1,K_0)$.
In this setting, Rosenberg~\cite{rose4} (see also~\cite[Theorem~2.12]{mpr19}) proved that
every stable subdomain $\Omega \subset M$ satisfies
\begin{equation}
\label{3.17}
d_M(x,\partial \Omega)\leq \frac{2\pi}{\sqrt{3c}}:=R_c,
\end{equation}
for all $x\in \Omega$. Since $3(H')^2+\frac{1}{2}\rho'\geq c':=c/\l^2$,
the estimate~\eqref{3.17} applied to the same stable
subdomain $\Omega$ viewed inside the domain of $F'$ gives
that the intrinsic distance in the metric induced by $F'$
from any $x\in \Omega$ to $\partial \Omega$ is at most
$\frac{2\pi}{\sqrt{3c'}}=\frac{2\pi \l}{\sqrt{3c}}$. This linear
scaling on the upper bound for the intrinsic radius of stable subdomains
allows us to reduce item~\ref{It3b} of Theorem~\ref{main2} to the
following statement.
\begin{proposition}
	\label{propos3.7}
Let $F\colon M\la X$ be an $H$-immersion in $\L(I, 1,1,1,1)$,
where $M$ is connected.
If the scalar curvature $\rho$ of $X$ satisfies $3H^2+\frac{1}{2}\rho
\geq c$	in $X$ for some $c>0$, then $M$ is compact, and
there exists $A_2(I,c)>0$ such that
\begin{equation}
\label{1.7'}
\mbox{\rm Area}(M)\leq A_2(I,c)  \qquad
\mbox{\rm Diameter}(M)\leq 2(I+1)R_c, \qquad
g(M)\leq \frac{A_2(I,c)}{C_1(I)}-1.
\end{equation}
\end{proposition}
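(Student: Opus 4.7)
The plan is to prove compactness and the diameter bound directly from Rosenberg's intrinsic radius estimate~\eqref{3.17} for stable subdomains, and then to deduce the area and genus bounds from Theorem~\ref{main22} and item~\ref{It1a} of Theorem~\ref{main2}, both already established.

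\textbf{Step 1 (Diameter bound and compactness).} I would argue by contradiction. Assume $\mbox{Diameter}(M)>2(I+1)R_c$. Since $M$ is a complete connected Riemannian surface, Hopf--Rinow supplies $x,y\in M$ with $L:=d_M(x,y)>2(I+1)R_c$ and a minimizing unit-speed geodesic $\g\colon [0,L]\to M$ from $x$ to $y$. Pick any $r\in (R_c,L/(2(I+1))]$, and set $q_i:=\g((2i-1)r)$ for $i=1,\dots,I+1$. Because $\g$ is distance-minimizing, $d_M(q_i,q_j)=2|i-j|r\geq 2r$, and the open intrinsic balls $B_M(q_i,r)$ are pairwise disjoint. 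If each $B_M(q_i,r)$ were unstable, then the $I+1$ corresponding first-Dirichlet eigenfunctions of the Jacobi operator (extended by zero to $M$) would span an $(I+1)$-dimensional subspace on which the index form of $M$ is negative definite, contradicting $\mbox{Index}(M)\leq I$. Hence some $B_M(q_{i_0},r)$ is stable, and applying Rosenberg's inequality~\eqref{3.17} with $\Omega = B_M(q_{i_0},r)$ at the center $q_{i_0}$ yields $r=d_M(q_{i_0},\partial \Omega)\leq R_c$, contradicting $r>R_c$. Therefore $\mbox{Diameter}(M)\leq 2(I+1)R_c$, and completeness plus bounded diameter forces $M$ to be compact via Hopf--Rinow.

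\textbf{Step 2 (Area bound).} With $D:=\mbox{Diameter}(M)\leq 2(I+1)R_c$ and any $x\in M$, compactness gives $M=B_M(x,D)$. We then apply Theorem~\ref{main22} with $\l=1$: if $I=0$ or if $I\geq 1$ with $k=0$ in the notation of Theorem~\ref{mainStructure}, inequality~\eqref{1.2a} produces an explicit upper bound on $\mbox{Area}(B_M(x,D))$ depending only on $I$ and $D$; if $k\geq 1$, inequality~\eqref{1.4b} does the same. Substituting $D\leq 2(I+1)R_c=4\pi(I+1)/\sqrt{3c}$ in each bound expresses it in terms of $I$ and $c$ alone, and we define $A_2(I,c)$ as the maximum of the two resulting case-bounds. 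This yields $\mbox{Area}(M)\leq A_2(I,c)$.

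\textbf{Step 3 (Genus bound).} Combining the area upper bound with the linear lower estimate~\eqref{1.1} of item~\ref{It1a} of Theorem~\ref{main2},
\[
C_1(I)(g(M)+1)\leq \mbox{Area}(M)\leq A_2(I,c),
\]
whence $g(M)\leq A_2(I,c)/C_1(I)-1$. The main obstacle is the strictness issue in Step~1: Rosenberg's intrinsic radius bound is non-strict, so one must insist on a ball radius $r$ \emph{strictly} greater than $R_c$ to extract a genuine contradiction, while still packing $I+1$ disjoint such balls along the minimizing geodesic; this is precisely what dictates the threshold $L>2(I+1)R_c$ for the geodesic length and hence the final diameter estimate. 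Once Step~1 is in place, Steps~2 and~3 are direct applications of Theorem~\ref{main22} and item~\ref{It1a}.
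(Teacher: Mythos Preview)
Your proof is correct and follows the same strategy as the paper's: Rosenberg's radius estimate~\eqref{3.17} combined with an $(I+1)$-ball packing along a minimizing geodesic for the diameter bound, then Theorem~\ref{main22} for the area bound and item~\ref{It1a} for the genus bound. The only organizational difference is that the paper first proves compactness separately (via a geodesic ray and arbitrarily large stable balls) and then the diameter estimate, whereas you obtain both at once from the diameter bound plus Hopf--Rinow; this is a harmless streamlining of the same idea.
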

\begin{proof}
We first show that $M$ is compact. Arguing by contradiction,
suppose $M$ is non-compact. Since $M$ is complete,
there is a geodesic ray in $M$, i.e., an embedded,
length-minimizing unit-speed geodesic arc $\g\colon[0,\infty)\to M$. Consider the infinite collection
\[
\cC(n)=\{ B_M(\g(2jn),n)\ | \ j\in \N\cup \{ 0\}\}
\]
of pairwise disjoint open intrinsic balls in $M$.
Since the index of $M$ is at most $I$, then the subcollection
of unstable balls in $\cC(n)$ is finite. This
implies that $M$ contains stable balls of arbitrarily large
radius, a property which contradicts that for $r>R_c$,
$B_M(x,r)$ cannot be stable as follows from~\eqref{3.17}.
Therefore, $M$ is compact.

We will divide the proof of~\eqref{1.7'} into two claims.
\begin{claim}
\label{claim3.8}
{\rm Diameter}$(M)\leq 2(I+1)R_c$ (i.e., the second inequality
in~\eqref{1.7'} holds).
\end{claim}
\begin{proof}[Proof of Claim~\ref{claim3.8}]
Arguing by contradiction, suppose that there exist points $p,q\in M$
at intrinsic distance $L:=d_M(p,q)>2(I+1)R_c$. Let $\G\colon [0,L]
\to M$ be a geodesic arc parameterized by arc length, such that $\G(0)=p$
and $\G(L)=q$. Choose $R>R_c$ such that $2(I+1)R\leq L$.
Consider the following collection of $I+1$
pairwise disjoint open intrinsic balls in $M$, see Figure~\ref{fig3}.
\[
\cC'(n)=\{ B_M\left(\G((2j-1)R),R\right)\ | \ j=1,\ldots ,I+1\}.
\]
\begin{figure}
\begin{center}[h]
\includegraphics[width=15cm]{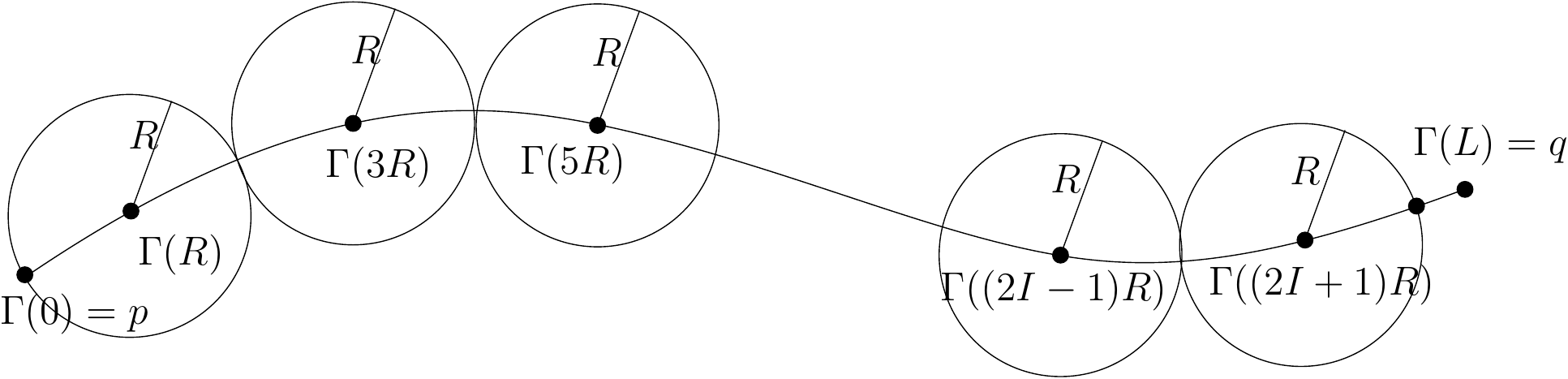}
\caption{The pairwise disjoint collection of metric balls in $\cC'(n)$.
Observe that the boundary of the last ball in the chain,
$\partial B_M(\G((2I+1)R,R)$, intersects the image of $\G$
at the points $\G(2IR)$, $\G(2(I+1)R)$, and that $2(I+1)R\leq L$ by construction.}
\label{fig3}
\end{center}
\end{figure}

Since the index of $M$ is at most $I$ and the $I+1$ balls in $\cC'(n)$
are pairwise disjoint, we deduce that at least one of these balls is
stable. This contradicts that $R>R_c$ and~\eqref{3.17}. This
contradiction proves the claim.
\end{proof}
\begin{claim}
\label{claim3.9}
Let $\wt{h}=\wt{h}(I,r)
\colon (\N\cup \{ 0\})\times (0,\infty) \to (0,\infty)$
be the maximum of the right-hand-sides of~\eqref{1.2a} and 
\eqref{1.4b}. Then, the first and third inequalities in~\eqref{1.7'}
hold for $A_2(I,c)=\wt{h}(I,2(I+1)R_c)$.
\end{claim}
\begin{proof}[Proof of Claim~\ref{claim3.9}]
Observe that $r\mapsto \wt{h}(I,r)$ is increasing. Take $x\in M$. By
the already proven inequalities~\eqref{1.2a}
and~\eqref{1.4b}, we have $\mbox{Area}(B_M(x,r))\leq \wt{h}(I,r)$
for all $r>0$. Applying this estimate to the choice
$r=D:=\mbox{Diameter}(M)<\infty$ (observe that $M=B_M(x,D)$) and
using that $\wt{h}(I,r)$ is increasing in $r$, we get
\begin{equation}
\label{3.20}
\mbox{\rm Area}(M)\leq \wt{h}(I,D)\stackrel{\mbox{\footnotesize (Claim~\ref{claim3.8})}}{\leq }
\wt{h}(I,2(I+1)R_c)=A_2(I,c).
\end{equation}
and thus, the first inequality in~\eqref{1.7'} holds.
As for the third one,
it is clearly equivalent to proving that $C_1(I)(g(M)+1)\leq A_2(I,c)$.
Applying~\eqref{1.1} we have
\[
C_1(I)(g(M)+1)\leq \mbox{Area}(M)
\stackrel{\eqref{3.20}}{\leq }A_2(I,c),
\]
and the proof of the claim is complete.
\end{proof}

%

Claims~\ref{claim3.8} and~\ref{claim3.9} prove~\eqref{1.7'},
which finishes the proof of Proposition~\ref{propos3.7}, and
consequently item~\ref{It3b} of Theorem~\ref{main2} is also proved.
\end{proof}
\begin{remark}
{\rm
Since the function $\wt{h}=\wt{h}(I,r)$ appearing in the proof of
Claim~\ref{claim3.9} is increasing in $r$, and $R_c$ is decreasing in $c$,
we deduce that $A_2(I,c)=\wt{h}(I,2(I+1)R_c)$ is decreasing in $c$.
This indicates that if we relax the hypothesis $3H^2+\frac{1}{2}\rho
\geq c$	in Proposition~\ref{propos3.7} by taking $c\to 0^+$, then
the estimates for the area, diameter and genus of $M$ in
Proposition~\ref{propos3.7} get worse (in fact, $\lim_{c\to 0^+}
A_2(I,c)=\lim_{c\to 0^+}
R_c=\infty$).
}
\end{remark}

\center{William H. Meeks, III at  profmeeks@gmail.com\\
		Mathematics Department, University of Massachusetts, Amherst, MA 01003}
\center{Joaqu\'\i n P\'{e}rez at jperez@ugr.es\\
		Department of Geometry and Topology and Institute of Mathematics
		(IMAG), University of Granada, 18071, Granada, Spain}
\bibliographystyle{plain}
\bibliography{bill}
\end{document}